\renewcommand{\mod}{\operatorname{mod}\nolimits}
\renewcommand{\Im}{\operatorname{Im}\nolimits}
\renewcommand{\dim}{\operatorname{dim}\nolimits}
\newcommand{\op}{{\operatorname{op}\nolimits}}
\newcommand{\Hom}{\operatorname{Hom}\nolimits}
\newcommand{\Ker}{\operatorname{Ker}\nolimits}
\newcommand{\Tor}{\operatorname{Tor}\nolimits}
\newcommand{\Ext}{\operatorname{Ext}\nolimits}
\newcommand{\HH}{\operatorname{HH}\nolimits}
\newcommand{\pdim}{\operatorname{pdim}\nolimits}
\newcommand{\idim}{\operatorname{idim}\nolimits}
\newcommand{\gldim}{\operatorname{gldim}\nolimits}
\newcommand{\res}{\operatorname{res}\nolimits}
\newcommand{\rad}{\operatorname{rad}\nolimits}
\newcommand{\rrad}{\mathfrak{r}}
\newcommand{\cQ}{{\mathcal Q}}
\newcommand{\mo}{{\mathfrak o}}
\newcommand{\mt}{{\mathfrak t}}
\newtheorem{lem}{Lemma}[section]
\newtheorem{prop}[lem]{Proposition}
\newtheorem{cor}[lem]{Corollary}
\newtheorem{thm}[lem]{Theorem}
\theoremstyle{definition}
\newtheorem{defin}[lem]{Definition}
\newtheorem{remark}[lem]{Remark}
\newtheorem{example}[lem]{Example}
\newtheorem*{remark*}{Remark}
\begin{document}

\topmargin 0cm
\oddsidemargin 0.5cm
\evensidemargin 0.5cm

\title[Hochschild cohomology, finiteness conditions and \dots]
{Hochschild cohomology, finiteness conditions and a generalisation of $d$-Koszul algebras}

\author[Jawad]{Ruaa Jawad}
\address{Ruaa Jawad\\
Preparation of Trained Technicians Institute, Middle Technical University \\
University of Baghdad Post Office, Al Jadriya, P.O. Box 47123\\
Baghdad, Iraq}
\email{ruaayousuf@gmail.com}
\author[Snashall]{Nicole Snashall}
\address{Nicole Snashall\\
School of Mathematics and Actuarial Science\\
University of Leicester \\
University Road \\
Leicester LE1 7RH \\
United Kingdom}
\email{njs5@le.ac.uk}

\thanks{This work formed part of the first author's PhD thesis at the University of Leicester, which was supported by The Higher Committee For Education Development in Iraq (HCED)}

\subjclass[2010]{16G20, %Repns of quivers and posets
16E05, %Syzygies, resolutions, complexes
16E30, %Homological functors on modules (Tor, Ext, etc.)
16E40, %(Co)homology of rings and algebras (e.g. Hochschild, cyclic, dihedral, etc.)
16S37. %Quadratic and Koszul algebras
}
\keywords{$d$-Koszul, projective resolution, Ext algebra, Hochschild cohomology, finiteness condition.}

\begin{abstract}
Given a finite-dimensional algebra $\Lambda$ and $A \geqslant 1$, we construct a new algebra $\tilde{\Lambda}_A$, called the stretched algebra, and relate the homological properties of $\Lambda$ and $\tilde{\Lambda}_A$. We investigate Hochschild cohomology and the finiteness condition {\bf (Fg)}, and use stratifying ideals to show that $\Lambda$ has {\bf (Fg)} if and only if $\tilde{\Lambda}_A$ has {\bf (Fg)}. We also consider projective resolutions and apply our results in the case where $\Lambda$ is a $d$-Koszul algebra for some $d \geqslant 2$.
\end{abstract}

\maketitle

\section*{Introduction}

Let $K$ be a field and let $\Lambda = K\cQ/I$ be a finite-dimensional
algebra where $I$ is an admissible ideal of $K\cQ$.
For each $A \geqslant 1$, we construct from $\Lambda$ a new algebra
$\tilde{\Lambda}_A$, called the stretched algebra.
The aim of the paper is to relate the homological properties of $\Lambda$ and $\tilde{\Lambda}_A$. In Section~\ref{sec:Fg}, the focus is on Hochschild cohomology and the finiteness condition {\bf (Fg)} of \cite{EHSST}, and in Section~\ref{sec:minprojres} we look at projective resolutions and apply the results to construct examples of stretched algebras.

Section~\ref{sec:Fg} studies the Hochschild cohomology of $\Lambda$ and the stretched algebra $\tilde{\Lambda}_A$. Our motivation here lies in the theory of support varieties.
For a group algebra of a finite group, Carlson introduced a powerful theory of support varieties of modules \cite{C1}, \cite{C2}.
Support varieties were extended to finite-dimensional algebras by Snashall and Solberg in \cite{SS}, using the Hochschild cohomology ring of the algebra.
And, under the finiteness condition {\bf (Fg)} of \cite{EHSST} (see Definition~\ref{defin:fg}), many of the properties known for the group situation were shown to have analogues in this more general setting.
Subsequently, the condition {\bf (Fg)} has been widely studied. Our intention is to use Nagase's result \cite[Proposition 6]{N} concerning {\bf (Fg)} and algebras with stratifying ideals.
In Theorem~\ref{thm:stratifying ideal} we give an idempotent element $\varepsilon$ of the stretched algebra $\tilde{\Lambda}_A$, proving that $\langle\varepsilon\rangle$ is a stratifying ideal in $\tilde{\Lambda}_A$. We then show in Corollary~\ref{cor:pdim} that the projective dimension of $\tilde{\Lambda}_A/\langle\varepsilon\rangle$ is 2 as a $\tilde{\Lambda}_A$-$\tilde{\Lambda}_A$-bimodule. Our main result is Theorem~\ref{thm:fg}, where we show that $\tilde{\Lambda}_A$ has {\bf (Fg)} if and only if $\Lambda$ has {\bf (Fg)}.

Section~\ref{sec:minprojres} considers projective resolutions. In Theorem~\ref{thm:projres}, we start with a minimal projective resolution of $\Lambda/\rrad$
as a right $\Lambda$-module, and explicitly describe a minimal projective
resolution of $\tilde{\Lambda}_A / \tilde{\rrad}_A$ as a right $\tilde{\Lambda}_A$-module, where $\rrad$ (resp.\ $\tilde{\rrad}_A$) denotes the Jacobson radical of $\Lambda$ (resp.\ $\tilde{\Lambda}_A$).
We apply this in the case where $\Lambda$ is a $d$-Koszul algebra for some $d \geqslant 2$.
This connects with work of Leader \cite{L} in which she considered a family of algebras which are
seen to be stretched algebras in the special case where $\Lambda$ is a $d$-Koszul algebra. Our approach is very different, but as a consequence and in the case where $\Lambda$ is $d$-Koszul, we recover \cite[Theorem 8.15]{L} by showing that $\tilde{\Lambda}_A$ is a $(D,A)$-stacked algebra where $D = dA$; this is Theorem~\ref{(D,A)}.
The class of $(D,A)$-stacked algebras was introduced by Leader and Snashall in \cite[Definition 2.1]{LS} (see Definition~\ref{defin:(D,A)-stacked}) and provides a natural generalisation of Koszul and $d$-Koszul algebras.
Thus Theorem~\ref{(D,A)} gives us examples of stretched algebras as well as a construction of $(D,A)$-stacked algebras.

\medskip

We keep the following notation throughout the paper. The set of vertices of a quiver $\cQ$ is denoted by $\cQ_0$.
An arrow $\alpha$ starts at
$\mo(\alpha)$ and ends at $\mt(\alpha)$; arrows in a path are read from left to right.
A path $p = \alpha_1\alpha_2 \cdots \alpha_n$, where  $\alpha_1, \alpha_2 , \dots , \alpha_n$
are arrows, is of length $n$ with $\mo(p) = \mo(\alpha_{1})$ and $\mt(p) = \mt(\alpha_{n})$.
We write $\ell(p)$ for the length of the path $p$.
An element $x$ in $K\cQ$ is {\it uniform} if there exist vertices $v, v'$ in $\cQ$
such that $x = v x = x v'$. We then write $\mo(x) = v$ and $\mt(x) = v'$.
If the ideal $I$ is generated by paths in $K\cQ$ then $K\cQ / I$ is a {\it monomial algebra}.
If $I$ is length homogeneous, then $\Lambda = \Lambda_0 \oplus \Lambda_1 \oplus \cdots$ is a
graded algebra with the length grading, and $\Lambda_0 \cong \Lambda/\rrad$.
The Ext algebra of $\Lambda$
is given by $E(\Lambda) = \oplus_{n\geqslant 0}\Ext^n_\Lambda(\Lambda/\rrad,\Lambda/\rrad)$
with the Yoneda product. The Hochschild cohomology ring of $\Lambda$ is given by
$\HH^*(\Lambda) = \Ext^*_{\Lambda^e}(\Lambda,\Lambda) = \oplus_{n\geqslant 0}\Ext^n_{\Lambda^e}(\Lambda,\Lambda)$ with the Yoneda product, where $\Lambda^e = \Lambda^\op \otimes_K \Lambda$ is the enveloping algebra of $\Lambda$.
All modules are finite-dimensional right modules.
We write $\dim$ for $\dim_K$ and $\otimes$ for $\otimes_K$; in all other cases the subscripts are specified. We use $\pdim$ for the projective dimension, $\idim$ for the injective dimension and $\gldim$ for the global dimension.

\bigskip

\section{Constructing the stretched algebra}\label{sec:dtoD}

Let $\Lambda = K\cQ/I$ be a finite-dimensional algebra where $I$ is generated by
a minimal set $g^2$ of uniform elements in $K\cQ$.
Let $A \geqslant 1$.
We describe the construction of $\tilde{\Lambda}_A$ by using the quiver $\cQ$ and ideal
$I$ of $K\cQ$ to define a new quiver $\tilde{\cQ}_A$ and admissible ideal $\tilde{I}_A$ of $K\tilde{\cQ}_A$
giving $\tilde{\Lambda}_A = K\tilde{\cQ}_A/\tilde{I}_A$. This construction builds on ideas in \cite{L}.
We begin with the quiver $\tilde{\cQ}_A$.

\begin{defin}\label{defin:quiverconstruct}
Let $\cQ$ be a finite quiver. Let $A \geqslant 1$.
We construct the new quiver $\tilde{\cQ}_A$ as follows:
\begin{enumerate}
\item[$\bullet$] All vertices of $\cQ$ are also vertices in $\tilde{\cQ}_A$.
\item[$\bullet$] For each arrow $\alpha$ in $\cQ$ we have $A$ arrows $\alpha_1, \alpha_2 , \ldots , \alpha_A$ in $\tilde{\cQ}_A$ and additional vertices $w_1 , w_2 , \ldots , w_{A-1}$ in $\tilde{\cQ}_A$, such that:\\
$\hspace*{2cm}\begin{array}{lclcl}
\mo(\alpha_1) & = & \mo(\alpha) & & \\
\mt(\alpha_1) & = & \mo(\alpha_2) & = & w_1  \\
\mt(\alpha_2) & = & \mo(\alpha_3) & = & w_2 \\
 & \vdots & & \vdots & \\
\mt(\alpha_{A-1}) & = & \mo(\alpha_A) & = & w_{A-1} \\
\mt(\alpha_A) & = & \mt(\alpha) & &
\end{array}$\\
and the only arrows incident with the vertex $w_j$ are $\alpha_j$ and $\alpha_{j+1}$.
\end{enumerate}
\end{defin}

In this way the arrow $\alpha$ in $\cQ$ corresponds to a path $\alpha_1 \cdots \alpha_A$ of length $A$ in $\tilde{\cQ}_A$. For ease of notation, we identify the set of vertices $\cQ_0$ of $\cQ$ with the corresponding subset of the vertices of $\tilde{\cQ}_A$.

\begin{defin}\label{defin:theta}
Let $\theta^*:K\cQ \rightarrow K\tilde{\cQ}_A$ be the $K$-algebra homomorphism which is induced from
$$\begin{cases}
v\mapsto v & \mbox{for each vertex $v$ in $\mathcal{Q}$},\\
\alpha \mapsto \alpha_1\alpha_2\cdots\alpha_A & \mbox{for each arrow $\alpha$ in $\mathcal{Q}$}.
\end{cases}$$
Moreover, $\theta^*$ is also a $K$-algebra monomorphism.
\end{defin}

\begin{defin}\label{shortestpath}
Suppose $w \in ({\tilde{\mathcal Q}}_A)_0\setminus\mathcal{Q}_0$.
Define $\tilde{p}_w$ to be the unique shortest path in $K\tilde{\mathcal{Q}}_A$ which starts
at a vertex in ${\mathcal{Q}}_0$ and ends at $w$. Define $\tilde{q}_w$ to be the unique
shortest path in $K\tilde{\mathcal{Q}}_A$ which starts at the vertex $w$ and ends at a vertex
in $\mathcal{Q}_0$.
\end{defin}

\begin{remark}\label{rem:w}
Let $w \in ({\tilde{\mathcal Q}}_A)_0\setminus\mathcal{Q}_0$.
Then there is a unique arrow $\alpha$ in $\cQ$ such that
$\theta^*(\alpha) = \alpha_1 \cdots \alpha_A$ and
$w = w_i$ for some $i = 1, \dots, A-1$.
Let $v = \mo(\alpha)$ and let $v' = \mt(\alpha)$.
Then the quiver $\tilde{\mathcal{Q}}_A$ contains the subquiver
$$\xymatrix{
v\ar[r]^{\alpha_1} & w_1 \ar[r]^{\alpha_2} & w_2\ar[r]^{\alpha_3} & \cdots \ar[r]^{\alpha_{A-1}} &
w_{A-1}\ar[r]^{\alpha_A} & v'
}$$
Thus $\tilde{p}_{w_i} = \alpha_1\cdots\alpha_i$ and $\tilde{q}_{w_i} = \alpha_{i+1}\cdots\alpha_A$.
Moreover $\mo(\tilde{p}_{w_i}) = v, \mt(\tilde{q}_{w_i}) = v'$ and $\tilde{p}_{w_i} \tilde{q}_{w_i} = \alpha_1 \cdots \alpha_A$.

We may illustrate these paths by:
$$\xymatrix{
v\ar@{~>}[rr]^{\tilde{p}_w} & & w\ar@{~>}[rr]^{\tilde{q}_w} & & v'
}$$
\end{remark}

We are now ready to define the algebra $\tilde{\Lambda}_A$.

\begin{defin}\label{defin:algebraconstruct}
Let $\Lambda = K\cQ/I$ be a finite-dimensional algebra where $I$ is generated by a minimal set
$g^2$ of uniform elements in $K\cQ$. List the elements of $g^2$ as $g^2_1, g^2_2,\dots, g^2_m$.
Let $A \geqslant 1$. Let $\tilde{\cQ}_A$ be the quiver defined in Definition~\ref{defin:quiverconstruct}.
For $i = 1, \dots , m$, define $\tilde{g}^2_i = \theta^*(g^2_i)$.
Then each $\tilde{g}^2_i$ is a uniform element in $K\tilde{\mathcal{Q}}_A$ with
both $\mo(\tilde{g}^2_i)$ and $\mt(\tilde{g}^2_i)$ being vertices in ${\mathcal Q}$,
namely, $\mo(\tilde{g}^2_i)=\mo(g^2_i)$ and $\mt(\tilde{g}^2_i) = \mt(g^2_i)$.
We define $\tilde{I}_A$ to be the ideal of $K\tilde{\mathcal{Q}}_A$ generated by
$\tilde{g}^2=\{\tilde{g}^2_1,\dots,\tilde{g}^2_m\}$ and define $\tilde{\Lambda}_A=K\tilde{\mathcal{Q}}_A/\tilde{I}_A$. We call $\tilde{\Lambda}_A$ the stretched algebra of $\Lambda$.
\end{defin}

\begin{example}
Let $\Lambda = K\cQ/I$ where $\cQ$ is the quiver
$$\xymatrix{
v\ar@(ul,dl)[]_{x}\ar@(ur,dr)[]^{y}
}$$
and $I=\langle x^2, xy-yx, y^2\rangle$.
Then, for $A=2$, the stretched algebra $\tilde{\Lambda}_2$ is given by $\tilde{\Lambda}_2 = K\tilde{\cQ}/\tilde{I}$ where $\tilde{\cQ}$ is the quiver
$$\xymatrix{
w\ar@<-0.5ex>@/_/[r]_{x_2} &
v\ar@<-0.5ex>@/_/[l]_{x_1}\ar@<0.5ex>@/^/[r]^{y_1} &
w'\ar@<0.5ex>@/^/[l]^{y_2}
}$$
and $\tilde{I} = \langle (x_1x_2)^2, x_1x_2y_1y_2 - y_1y_2x_1x_2, (y_1y_2)^2\rangle$.
\end{example}

This construction has the following properties.

\begin{prop}
Let $m_0$ be the number of vertices of $\mathcal{Q}$ and $m_1$  be the number of arrows of $\mathcal{Q}$. We have the following properties.
\begin{enumerate}
 \item The stretched algebra $\tilde{\Lambda}_A$ is a finite-dimensional algebra.
 \item The quiver $\tilde{\mathcal{Q}}_A$ has $m_0+m_1(A-1)$ vertices and $m_1A$ arrows.
 \item The set $\tilde{g}^2=\{\tilde{g}^2_1,\dots,\tilde{g}^2_m\}$ is a minimal generating set of uniform elements for $\tilde{I}_A$.
 \item If $I$ is generated by length homogeneous elements, then $\tilde{I}_A$ is generated by length homogeneous elements.
 \item If $I$ is generated by length homogeneous elements all of length $d$, then $\tilde{I}_A$ is generated by length homogeneous elements all of length $dA$.
 \item If $\Lambda$ is a monomial algebra, then $\tilde{\Lambda}_A$  is a monomial algebra.
\end{enumerate}
\end{prop}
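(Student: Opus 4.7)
The plan is to dispatch (2), (4), (5), (6) as immediate consequences of the construction and of $\theta^*$ being an algebra monomorphism, while concentrating effort on (1) and (3). For (2), Definition~\ref{defin:quiverconstruct} adds exactly $A-1$ new vertices and $A$ arrows per arrow of $\cQ$; the count is a direct tally. For (4), (5), (6), the relevant fact is that $\theta^*$ sends an arrow to a path of length $A$, hence a path of length $\ell$ to a path of length $A\ell$, and preserves the property of being a path; so length-homogeneous generators go to length-homogeneous generators (lengths multiplied by $A$), and paths go to paths. Since $\tilde I_A$ is by definition generated by $\theta^*(g^2_i)$, all three statements follow.

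For (1) I would show that $\tilde I_A$ is admissible in $K\tilde\cQ_A$. Since $I$ is admissible, there exists $n$ with $J^n\subseteq I$, where $J$ is the arrow ideal of $K\cQ$. The key structural observation is that because the only arrows incident with an internal vertex $w_j$ are $\alpha_j$ and $\alpha_{j+1}$, any path in $\tilde\cQ_A$ of length at least $N=(n+2)A$ must contain a subpath which is $\theta^*(p)$ for a path $p$ in $\cQ$ with $\ell(p)\geqslant n$. Such a $p$ lies in $I$, so $\theta^*(p)\in\tilde I_A$ and hence the whole long path is in $\tilde I_A$. This gives $\tilde J_A^N\subseteq\tilde I_A$. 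Containment $\tilde I_A\subseteq\tilde J_A^2$ is immediate since each $\tilde g^2_i$ is a combination of paths of length $\geqslant 2A\geqslant 2$. Hence $\tilde\Lambda_A$ is finite-dimensional.

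The main obstacle is (3), specifically minimality. Uniformity of each $\tilde g^2_i$ follows from $\mo(\tilde g^2_i)=\mo(g^2_i)$ and $\mt(\tilde g^2_i)=\mt(g^2_i)$, both in $\cQ_0$, as recorded in Definition~\ref{defin:algebraconstruct}. For minimality the plan is to exploit the rigidity of paths in $\tilde\cQ_A$ that start and end in $\cQ_0$. I would first establish the structural lemma that every path in $\tilde\cQ_A$ whose source and target are both in $\cQ_0$ lies in $\theta^*(K\cQ)$; indeed, starting at $v\in\cQ_0$ the only possible first arrow is some $\beta_1$, after which the internal vertices force the continuation $\beta_2,\beta_3,\ldots,\beta_A$, and the process repeats. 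Consequently, if $e$ denotes the sum of the idempotents of the vertices in $\cQ_0$, then $eK\tilde\cQ_A e=\theta^*(K\cQ)$.

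Now suppose, for contradiction, that $\tilde g^2_k=\sum_{i\neq k}\tilde r_i\tilde g^2_i\tilde s_i$ in $K\tilde\cQ_A$. Multiplying on the left by $\mo(g^2_k)$ and on the right by $\mt(g^2_k)$, and inserting $\mo(g^2_i)$ and $\mt(g^2_i)$ around each $\tilde g^2_i$, I may replace $\tilde r_i$ by $\mo(g^2_k)\tilde r_i\mo(g^2_i)$ and $\tilde s_i$ by $\mt(g^2_i)\tilde s_i\mt(g^2_k)$; these lie in $eK\tilde\cQ_A e=\theta^*(K\cQ)$ and hence equal $\theta^*(r_i)$ and $\theta^*(s_i)$ for uniquely determined $r_i,s_i\in K\cQ$. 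Applying $\theta^*$ to the relation $g^2_k-\sum_{i\neq k}r_ig^2_is_i$ yields zero, so injectivity of $\theta^*$ forces $g^2_k=\sum_{i\neq k}r_ig^2_is_i$, contradicting the minimality of $g^2$. This completes (3). I expect the construction of the $r_i,s_i$ via the idempotent $e$ and the path-rigidity lemma to be where most of the work lies; the remaining parts are formal.
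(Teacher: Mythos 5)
Your proof is correct. The paper states this proposition without proof, treating all six parts as immediate consequences of the construction; your argument supplies the missing details, and the key fact you isolate for minimality in (3) --- that any path in $\tilde{\cQ}_A$ with source and target in $\cQ_0$ lies in $\theta^*(K\cQ)$, so that $\varepsilon K\tilde{\cQ}_A\varepsilon=\theta^*(K\cQ)$ --- is precisely the observation the paper itself records immediately after the proposition in the discussion preceding Theorem~\ref{thm:varepsilon}, so your route coincides with the authors' implicit one.
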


To avoid too many subscripts and where there is no confusion, we write $\tilde{\Lambda}$ (resp.\ $\tilde{\mathcal{Q}}$, $\tilde{I}$) instead of $\tilde{\Lambda}_A$ (resp.\ $\tilde{\mathcal{Q}}_A$, $\tilde{I}_A$).

\begin{defin}
Let $\varepsilon = \sum_{v \in\mathcal{Q}_0} v$, which is considered as an element of $\tilde{\Lambda}$.
\end{defin}

From Definition~\ref{defin:theta} and using the construction of $\tilde{I}$, we let $\theta$ denote the map $\Lambda \to \tilde{\Lambda}$ which is induced by $\theta^*$. Note that $\theta$ is also a $K$-algebra monomorphism. It is immediate that $\varepsilon$ is an idempotent element of $\tilde{\Lambda}$ and that $\Im\theta \subseteq \varepsilon\tilde{\Lambda}\varepsilon$.
By the construction of $\tilde{\Lambda}$, if a uniform element $\tilde{p}\in\tilde{\Lambda}$ has
$\mo(\tilde{p})\in\mathcal{Q}_0$ and $\mt(\tilde{p})\in\mathcal{Q}_0$ then $\tilde{p}=\theta(p)$ for some
$p\in \Lambda$. Hence the algebras $\Lambda$ and $\varepsilon\tilde{\Lambda}\varepsilon$ are isomorphic.

\begin{thm}\label{thm:varepsilon}
Let $\Lambda=K\mathcal{Q}/I$ be a finite-dimensional algebra.
Then $\Lambda\cong \varepsilon\tilde{\Lambda}\varepsilon$.
\end{thm}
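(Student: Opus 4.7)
The plan is to show that the monomorphism $\theta: \Lambda \to \tilde{\Lambda}$ of Definition~\ref{defin:theta} corestricts to a $K$-algebra isomorphism $\Lambda \xrightarrow{\sim} \varepsilon\tilde{\Lambda}\varepsilon$. Since $\theta$ is already known to be an injective $K$-algebra homomorphism with $\Im\theta \subseteq \varepsilon\tilde{\Lambda}\varepsilon$ (which holds because $\theta$ fixes each vertex $v \in \cQ_0$, so every $\theta(p)$ is absorbed on both sides by $\varepsilon$), the only remaining task is to verify that $\theta$ surjects onto $\varepsilon\tilde{\Lambda}\varepsilon$.

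The key structural step, which I would first isolate as a claim about paths, is: every path $\tilde{p}$ in $K\tilde{\cQ}$ with $\mo(\tilde{p}) \in \cQ_0$ and $\mt(\tilde{p}) \in \cQ_0$ lies in the image of $\theta^*$. This follows directly from Definition~\ref{defin:quiverconstruct}: if $\tilde{p}$ starts at $v \in \cQ_0$, its first arrow must be some $\alpha_1$ for an arrow $\alpha$ of $\cQ$ with $\mo(\alpha) = v$, as the arrows of $\tilde{\cQ}$ out of $v$ are precisely of this form. Since the only arrow leaving $w_1$ is $\alpha_2$, the only one leaving $w_2$ is $\alpha_3$, and so on, the only way for $\tilde{p}$ to return to a vertex of $\cQ_0$ is to traverse the entire stretched block $\alpha_1 \alpha_2 \cdots \alpha_A = \theta^*(\alpha)$, landing at $\mt(\alpha) \in \cQ_0$. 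Iterating on the tail, one obtains $\tilde{p} = \theta^*(\alpha^{(1)} \alpha^{(2)} \cdots \alpha^{(n)})$ for a sequence of arrows of $\cQ$.

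Given this, surjectivity is immediate: any element of $\varepsilon\tilde{\Lambda}\varepsilon$ lifts to a $K$-linear combination of paths in $K\tilde{\cQ}$ from $\cQ_0$ to $\cQ_0$; by the previous paragraph each such path equals $\theta^*(p)$ for a path $p$ in $K\cQ$, and the class of the resulting preimage in $\Lambda$ then maps to the original element under $\theta$. I do not anticipate any substantive obstacle, since the whole argument rests on the observation that by construction the interior vertices $w_j$ have in-degree and out-degree both equal to one. This rigidity forces any path between vertices of $\cQ_0$ to decompose uniquely into complete stretched blocks, so the verification reduces to essentially combinatorial bookkeeping on paths.
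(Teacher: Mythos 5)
Your proposal is correct and follows essentially the same route as the paper: the paper's justification (given in the paragraph preceding the theorem) likewise rests on the monomorphism $\theta$ having image in $\varepsilon\tilde{\Lambda}\varepsilon$ together with the observation that any uniform element of $\tilde{\Lambda}$ starting and ending at vertices of $\mathcal{Q}_0$ is $\theta(p)$ for some $p\in\Lambda$, which is exactly your block-decomposition claim. Your write-up simply makes the combinatorial step (in-degree and out-degree one at the interior vertices $w_j$) more explicit than the paper does.
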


Furthermore, if $w\in\tilde{\mathcal{Q}}_0\setminus\mathcal{Q}_0$, then we observe from Remark~\ref{rem:w},
that any element of $\varepsilon K\tilde{\cQ} w$ can be written as $(\varepsilon \tilde{s} \varepsilon) v \tilde{p}_w$ for some $\tilde{s} \in K\tilde{\cQ}$. Similarly, any element of $w K\tilde{\cQ}\varepsilon$ can be written as $\tilde{q}_w v'(\varepsilon \tilde{s} \varepsilon)$ for some $\tilde{s} \in K\tilde{\cQ}$.

\begin{prop}\label{prop:theta props}
Let $w \in \tilde{\mathcal{Q}}_0\setminus\mathcal{Q}_0$.
Let $v =\mo(\tilde{p}_{w})$ and $v'=\mt(\tilde{q}_{w})$.
Let $\lambda \in \Lambda$ and $\tilde{\lambda} \in \tilde{\Lambda}$.
\begin{enumerate}
\item If $0\neq\tilde{\lambda} v\in \tilde{\Lambda}v$, then $0 \neq \tilde{\lambda}\tilde{p}_w \in \tilde{\Lambda}$.
\item If $0\neq v'\tilde{\lambda}\in v'\tilde{\Lambda}$, then $0 \neq \tilde{q}_w\tilde{\lambda} \in \tilde{\Lambda}$.
\end{enumerate}
\end{prop}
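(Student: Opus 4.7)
The two statements are symmetric, so the plan is to prove (1) in detail and note that (2) follows by the same argument applied on the left using $\tilde{q}_w$ in place of $\tilde{p}_w$. Since $v\,\tilde{p}_w = \tilde{p}_w$, I can replace $\tilde{\lambda}$ by $\tilde{\lambda}v$ and assume throughout that $\tilde{\lambda} = \tilde{\lambda}v$. The argument then rests on a single combinatorial observation about paths in $\tilde{\mathcal{Q}}$.

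That observation is the following: because the only arrows in $\tilde{\mathcal{Q}}$ incident with each internal vertex $w_j$ are $\alpha_j$ and $\alpha_{j+1}$ (Definition~\ref{defin:quiverconstruct}), every path in $\tilde{\mathcal{Q}}$ ending at $w=w_i$ must terminate with the segment $\tilde{p}_w = \alpha_1 \cdots \alpha_i$; equivalently, every such path factors uniquely as $p\,\tilde{p}_w$ where $p$ is a path ending at $v$. Since paths form a $K$-basis of $K\tilde{\mathcal{Q}}$, this implies that right-multiplication by $\tilde{p}_w$ is a $K$-linear injection $K\tilde{\mathcal{Q}}v \hookrightarrow K\tilde{\mathcal{Q}}w$.

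For the main argument, I would lift $\tilde{\lambda}$ to a representative $\hat{\lambda} \in K\tilde{\mathcal{Q}}v$ and suppose, for contradiction, that $\tilde{\lambda}\tilde{p}_w = 0$ in $\tilde{\Lambda}$, so that $\hat{\lambda}\tilde{p}_w \in \tilde{I}$. Writing $\hat{\lambda}\tilde{p}_w = \sum_j a_j\,\tilde{g}^2_{i_j}\,b_j$ in $K\tilde{\mathcal{Q}}$ with each $a_j, b_j$ uniform, and discarding terms whose product is zero or whose target differs from $w$, each $b_j$ is a path from $\mt(\tilde{g}^2_{i_j}) \in \mathcal{Q}_0$ to $w$. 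By the observation above, $b_j = b_j'\,\tilde{p}_w$ for a unique path $b_j'$ ending at $v$, so $\hat{\lambda}\tilde{p}_w = \bigl(\sum_j a_j\,\tilde{g}^2_{i_j}\,b_j'\bigr)\tilde{p}_w$, and the bracketed element lies in $K\tilde{\mathcal{Q}}v$. Injectivity of right-multiplication by $\tilde{p}_w$ now yields $\hat{\lambda} = \sum_j a_j\,\tilde{g}^2_{i_j}\,b_j' \in \tilde{I}$, contradicting $\tilde{\lambda}v \neq 0$. The main obstacle is justifying the cancellation of $\tilde{p}_w$ on the right, which is precisely what the combinatorial observation achieves; the remaining steps are formal bookkeeping in $K\tilde{\mathcal{Q}}$.
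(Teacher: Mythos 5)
Your proposal is correct and takes essentially the same route as the paper: both rest on the observation (recorded just before the proposition via Remark~\ref{rem:w}) that any element of $K\tilde{\mathcal{Q}}$ starting in $\mathcal{Q}_0$ and ending at $w$ factors on the right through $\tilde{p}_w$, which lets one rewrite the expression of $\hat{\lambda}\tilde{p}_w$ in terms of the generators $\tilde{g}^2_k$ of $\tilde{I}$ and then cancel $\tilde{p}_w$ to conclude $\tilde{\lambda}v\in\tilde{I}$. The only quibble is that your blanket claim that \emph{every} path ending at $w$ terminates in $\tilde{p}_w$ fails for paths of length less than $\ell(\tilde{p}_w)$ (these start at internal vertices), but you apply it only to the paths $b_j$, which start in $\mathcal{Q}_0$, so the argument is unaffected.
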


\begin{proof}
(1). Suppose that $\tilde{\lambda}\tilde{p}_w =0$ in $\tilde{\Lambda}$.
By considering $\tilde{\lambda}v$ as an element of $K\tilde{\cQ}$, we have that $\tilde{\lambda}\tilde{p}_w \in \tilde{I}$. Now, $\tilde{I}$ is generated by the set $\{\tilde{g}^2_1, \dots , \tilde{g}^2_m\}$ of uniform elements in $\varepsilon K\tilde{\cQ}\varepsilon$, so write
$\tilde{\lambda}\tilde{p}_w = \sum_k \tilde{r}_k\varepsilon \tilde{g}^2_k\varepsilon \tilde{s}_k w$
for some $\tilde{r}_k, \tilde{s}_k$ in $K\tilde{\cQ}$. As noted above, each term $\varepsilon \tilde{s}_k w$ is of the form $\varepsilon \tilde{t}_k v \tilde{p}_w$ for some $\tilde{t}_k \in K\tilde{\cQ}$.
So we have $\tilde{\lambda}\tilde{p}_w = (\sum_k \tilde{r}_k\varepsilon \tilde{g}^2_k\varepsilon \tilde{t}_k v)\tilde{p}_w$ in $K\tilde{\cQ}$ and hence $\tilde{\lambda}v \in \tilde{I}$. Thus $\tilde{\lambda} v = 0$ in $\tilde{\Lambda}v$ as required.

The proof of (2) is similar.
\end{proof}

The fact that $\tilde{I}$ is generated by uniform elements in $\varepsilon K\tilde{\cQ}\varepsilon$ is used again in the proofs of the next two propositions; they are straightforward and are left to the reader.

\begin{prop}\label{prop:iso modules}
Let $w \in \tilde{\mathcal{Q}}_0 \setminus Q_0$.
Let $v =\mo(\tilde{p}_{w})$ and $v'=\mt(\tilde{q}_{w})$. Let $B=\varepsilon\tilde{\Lambda}\varepsilon$.
Then we have the following properties:
\begin{enumerate}
\item $v'B \cong \tilde{q}_wB$ as right $B$-modules.
\item $v'\tilde{\Lambda} \cong \tilde{q}_w \tilde{\Lambda}$ as right $\tilde{\Lambda}$-modules.
\item $Bv \cong B\tilde{p}_w$ as left $B$-modules.
\item $\tilde{\Lambda}v \cong \tilde{\Lambda}\tilde{p}_w$ as left $\tilde{\Lambda}$-modules.
\end{enumerate}
\end{prop}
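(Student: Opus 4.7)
The plan is to define the isomorphisms as the obvious multiplications by $\tilde{p}_w$ (on the right, for the left-module statements (3) and (4)) and by $\tilde{q}_w$ (on the left, for the right-module statements (1) and (2)). In each case surjectivity is automatic, and injectivity reduces directly to Proposition~\ref{prop:theta props}. So the work is essentially bookkeeping on which idempotents sit where.

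For (2), define $\phi : v'\tilde{\Lambda} \to \tilde{q}_w\tilde{\Lambda}$ by $\phi(x) = \tilde{q}_w x$. Since $\tilde{q}_w = \tilde{q}_w v'$, this is well-defined and lands in $\tilde{q}_w\tilde{\Lambda}$; it is plainly a right $\tilde{\Lambda}$-module homomorphism. Surjectivity: any element of $\tilde{q}_w\tilde{\Lambda}$ is $\tilde{q}_w\tilde{\lambda} = \tilde{q}_w v'\tilde{\lambda} = \phi(v'\tilde{\lambda})$. Injectivity: if $\phi(v'\tilde{\lambda}) = \tilde{q}_w\tilde{\lambda} = 0$, then Proposition~\ref{prop:theta props}(2) forces $v'\tilde{\lambda} = 0$. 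Statement (4) is dual: the map $\psi : \tilde{\Lambda}v \to \tilde{\Lambda}\tilde{p}_w$, $x \mapsto x\tilde{p}_w$, is a left $\tilde{\Lambda}$-module homomorphism (using $\tilde{p}_w = v\tilde{p}_w$), is surjective, and is injective by Proposition~\ref{prop:theta props}(1).

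For the $B$-module versions (1) and (3), I would use exactly the same maps, restricted appropriately. For (1), observe that $v', w \in$ (well, $v' \in \mathcal{Q}_0$, so $v'\varepsilon = v' = \varepsilon v'$ and $\tilde{q}_w\varepsilon = \tilde{q}_w$); hence left multiplication by $\tilde{q}_w$ sends $v'B = v'\varepsilon\tilde{\Lambda}\varepsilon$ into $\tilde{q}_w\tilde{\Lambda}\varepsilon = \tilde{q}_w B$. This is a right $B$-module map, surjective by the same rewriting $\tilde{q}_w b = \tilde{q}_w v' b$ for $b \in B$, and injective because if $\tilde{q}_w(v'b) = 0$ with $b \in B$, then writing $b = \varepsilon\tilde{\mu}\varepsilon$ and applying Proposition~\ref{prop:theta props}(2) to $\tilde{\lambda} = \tilde{\mu}\varepsilon \in \tilde{\Lambda}$ gives $v'b = v'\tilde{\mu}\varepsilon = 0$. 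Statement (3) is handled symmetrically using right multiplication by $\tilde{p}_w$ and Proposition~\ref{prop:theta props}(1).

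The only mild subtlety is the bookkeeping in (1) and (3) that the image of the multiplication map actually lands in the $B$-submodule (not just in $\tilde{\Lambda}$), which is taken care of by the fact that $\mo(\tilde{p}_w) = v \in \mathcal{Q}_0$ and $\mt(\tilde{q}_w) = v' \in \mathcal{Q}_0$, so that $\varepsilon$ absorbs correctly at the relevant endpoints. No genuine obstacle is expected; Proposition~\ref{prop:theta props} does all of the nonformal work.
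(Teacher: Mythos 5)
Your proof is correct, and it follows exactly the route the paper intends: the paper leaves this proposition to the reader, remarking only that it rests on the fact that $\tilde{I}$ is generated by uniform elements of $\varepsilon K\tilde{\cQ}\varepsilon$, which is precisely the content of Proposition~\ref{prop:theta props} that you invoke for injectivity of the multiplication maps. The idempotent bookkeeping in parts (1) and (3) is handled correctly, so nothing is missing.
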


\begin{prop}\label{prop:dimension of wiL}
Let $w \in \tilde{\mathcal{Q}}_0 \setminus Q_0$. We use the notation of Remark~\ref{rem:w}, so $w = w_i$ for some $i=1, \dots, A-1$.
\begin{enumerate}
\item An element of $\tilde{\Lambda}w_i$ is of the form
$$\tilde{\lambda}w_i=\sum_{j=1}^i c_jw_j\alpha_{j+1}\cdots\alpha_iw_i+\tilde{\mu}\tilde{p}_{w_i}$$
where $c_j\in K$, $\tilde{\mu}\in\tilde{\Lambda}$.
\item An element of $w_i\tilde{\Lambda}$ is of the form
$$w_i\tilde{\lambda} =\sum_{j=i}^{A-1} c_jw_i\alpha_{i+1}\cdots\alpha_jw_j+\tilde{q}_{w_i}\tilde{\mu}$$
where $c_j\in K$, $\tilde{\mu}\in\tilde{\Lambda}$.
\item $\dim \tilde{\Lambda}w_i = i+\dim\tilde{\Lambda}v$.
\item $\dim w_i\tilde{\Lambda} = (A-i)+\dim v'\tilde{\Lambda}$.
\item $\varepsilon \tilde{\Lambda}w = \varepsilon\tilde{\Lambda}\varepsilon\tilde{p}_w$ and  $w\tilde{\Lambda}\varepsilon = \tilde{q}_w\varepsilon\tilde{\Lambda}\varepsilon$.
\end{enumerate}
\end{prop}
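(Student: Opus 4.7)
The plan is to extract (1) and (2) from the local structure of the quiver $\tilde{\cQ}$ around $w_i$: by Definition~\ref{defin:quiverconstruct}, the only arrow of $\tilde{\cQ}$ ending at $w_j$ is $\alpha_j$ and the only arrow leaving $w_j$ is $\alpha_{j+1}$. A downward recursion on indices then shows that every path in $K\tilde{\cQ}$ ending at $w_i$ is either a short internal path $w_j\alpha_{j+1}\cdots\alpha_i$ for some $j = 1,\ldots,i$ (including the trivial case $j = i$), or else passes through $v = \mo(\alpha)$ and terminates with the suffix $\tilde{p}_{w_i}$. Reducing modulo $\tilde{I}$ then yields the expression in (1), and (2) follows by the mirror argument applied to paths starting at $w_i$.

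For (3), part (1) tells us that $\tilde{\Lambda}w_i$ is spanned as a $K$-vector space by the $i$ short paths together with the submodule $\tilde{\Lambda}\tilde{p}_{w_i}$. The crucial point is that this sum is \emph{direct}: since $\tilde{I}$ is generated by uniform elements $\tilde{g}^2_k \in \varepsilon K\tilde{\cQ}\varepsilon$, any element of $\tilde{I}w_i$ written as $\sum_k \tilde{r}_k\tilde{g}^2_k\tilde{s}_k w_i$ forces each path $\tilde{s}_k w_i$ (starting at $\mt(\tilde{g}^2_k)\in\cQ_0$ and ending at $w_i$) to factor as $\tilde{s}'_k\tilde{p}_{w_i}$ by the analysis already carried out for (1). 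Hence $\tilde{I}w_i = (\tilde{I}v)\tilde{p}_{w_i}$, so the short paths, which have starting vertex $w_j\notin\cQ_0$, cannot be cancelled by elements of $\tilde{\Lambda}\tilde{p}_{w_i}$. Combined with the isomorphism $\tilde{\Lambda}v \cong \tilde{\Lambda}\tilde{p}_{w_i}$ from Proposition~\ref{prop:iso modules}(4), this gives the dimension formula in (3). Part (4) is completely symmetric, using (2) together with Proposition~\ref{prop:iso modules}(2).

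For (5), I multiply the expression in (1) on the left by $\varepsilon = \sum_{v\in\cQ_0} v$: each short path $w_j\alpha_{j+1}\cdots\alpha_i$ starts at $w_j \notin \cQ_0$ and so is annihilated, while the surviving term becomes $\varepsilon\tilde{\mu}\tilde{p}_w$; since $\tilde{\mu}$ ends at $v \in \cQ_0$, we have $\varepsilon\tilde{\mu} \in \varepsilon\tilde{\Lambda}\varepsilon$, yielding $\varepsilon\tilde{\Lambda}w = \varepsilon\tilde{\Lambda}\varepsilon\tilde{p}_w$, and the other identity follows from (2) by symmetry. The main obstacle is the directness in (1) and (2)—equivalently, that the short paths survive in $\tilde{\Lambda}$—which rests squarely on the observation (already exploited in the preceding propositions) that all generators of $\tilde{I}$ lie in $\varepsilon K\tilde{\cQ}\varepsilon$, and so cannot produce relations supported on the interior vertices $w_j$ alone.
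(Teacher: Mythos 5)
Your proof is correct: the paper leaves this proposition to the reader, saying only that it is straightforward and that the key fact is that $\tilde{I}$ is generated by uniform elements of $\varepsilon K\tilde{\cQ}\varepsilon$, which is exactly the pivot of your argument (it forces $\tilde{I}w_i = (\tilde{I}v)\tilde{p}_{w_i}$, so the ``short'' paths supported on the interior vertices survive and the counts in (3) and (4) follow). Nothing further to add.
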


\begin{thm}\label{projBmodule}
Let $\Lambda=K\mathcal{Q}/I$ and let $\tilde{\Lambda}$ be the stretched algebra.
Let $B=\varepsilon\tilde{\Lambda}\varepsilon$. Then
\begin{enumerate}
\item $\tilde{\Lambda}\varepsilon$ is projective as a right $B$-module.
\item $\varepsilon\tilde{\Lambda}$ is projective as a left $B$-module.
\end{enumerate}
\end{thm}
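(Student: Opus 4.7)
The plan is to decompose $\tilde{\Lambda}\varepsilon$ as a right $B$-module along the vertex idempotents of $\tilde{\cQ}$ and then identify each summand with an indecomposable projective right $B$-module. Since $\{u \mid u \in \tilde{\cQ}_0\}$ is a complete set of primitive orthogonal idempotents in $\tilde{\Lambda}$, and $\varepsilon$ commutes with every such $u$ (they are all idempotents satisfying $u\varepsilon = \varepsilon u \in \{0, u, \varepsilon u\}$ in an appropriate sense), we get a direct sum decomposition
\[
\tilde{\Lambda}\varepsilon \;=\; \bigoplus_{u \in \tilde{\cQ}_0} u\tilde{\Lambda}\varepsilon
\]
as right $B$-modules. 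It then suffices to verify that each summand $u\tilde{\Lambda}\varepsilon$ is projective over $B$.

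For $u = v \in \cQ_0$, we have $v\tilde{\Lambda}\varepsilon = v\varepsilon\tilde{\Lambda}\varepsilon = vB$, which is an indecomposable projective right $B$-module. For $u = w \in \tilde{\cQ}_0 \setminus \cQ_0$, I would apply Proposition~\ref{prop:dimension of wiL}(5) to rewrite
\[
w\tilde{\Lambda}\varepsilon \;=\; \tilde{q}_w\,\varepsilon\tilde{\Lambda}\varepsilon \;=\; \tilde{q}_w B,
\]
and then invoke Proposition~\ref{prop:iso modules}(1) to conclude that $\tilde{q}_w B \cong v'B$ as right $B$-modules, where $v' = \mt(\tilde{q}_w) \in \cQ_0$. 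Thus every summand is (isomorphic to) an indecomposable projective right $B$-module, and so $\tilde{\Lambda}\varepsilon$ is projective over $B$. This proves (1).

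For part (2), the argument is symmetric: decompose $\varepsilon\tilde{\Lambda} = \bigoplus_{u \in \tilde{\cQ}_0} \varepsilon\tilde{\Lambda}u$ as left $B$-modules, observe that $\varepsilon\tilde{\Lambda}v = Bv$ for $v \in \cQ_0$, and use Proposition~\ref{prop:dimension of wiL}(5) together with Proposition~\ref{prop:iso modules}(3) to identify $\varepsilon\tilde{\Lambda}w = B\tilde{p}_w \cong Bv$ where $v = \mo(\tilde{p}_w) \in \cQ_0$. Each summand is then an indecomposable projective left $B$-module.

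There is no real obstacle here; the work has all been done in Propositions~\ref{prop:iso modules} and~\ref{prop:dimension of wiL}. The only point worth checking carefully is the direct sum decomposition along vertices of $\tilde{\cQ}_0$ and the verification that each $u\tilde{\Lambda}\varepsilon$ (respectively $\varepsilon\tilde{\Lambda}u$) coincides with the claimed right (respectively left) $B$-module — essentially an observation about how $\varepsilon$ interacts with the vertex idempotents of $\tilde{\cQ}$.
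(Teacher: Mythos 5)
Your proposal is correct and follows essentially the same route as the paper: the paper decomposes $\tilde{\Lambda}\varepsilon = \varepsilon\tilde{\Lambda}\varepsilon \oplus \bigl(\oplus_{w\in\tilde{\cQ}_0\setminus\cQ_0} w\tilde{\Lambda}\varepsilon\bigr)$ and applies Proposition~\ref{prop:dimension of wiL}(5) and Proposition~\ref{prop:iso modules}(1) to identify each $w\tilde{\Lambda}\varepsilon$ with $\mt(\tilde{q}_w)B$, exactly as you do (your refinement of $\varepsilon\tilde{\Lambda}\varepsilon$ into the individual $v\tilde{\Lambda}\varepsilon = vB$ is an immaterial difference). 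Part (2) is handled symmetrically in both.
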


\begin{proof}
(1). We have that $\tilde{\Lambda}\varepsilon
= \varepsilon\tilde{\Lambda}\varepsilon \oplus (1-\varepsilon)\tilde{\Lambda}\varepsilon
= \varepsilon\tilde{\Lambda}\varepsilon \oplus \big(\oplus_{w \in \tilde{\mathcal{Q}}_0\setminus{\mathcal Q}_0}w\tilde{\Lambda}\varepsilon\big)$.
From Proposition~\ref{prop:dimension of wiL}(5) and Proposition~\ref{prop:iso modules}(1), we have that $w\tilde{\Lambda}\varepsilon = \tilde{q}_w\varepsilon\tilde{\Lambda}\varepsilon = \tilde{q}_w B \cong \mt(\tilde{q}_w)B$.
Thus, $\tilde{\Lambda}\varepsilon \cong B \oplus \big(\oplus_{w \in \tilde{\mathcal{Q}}_0\setminus\mathcal{Q}_0}\mt(\tilde{q}_w)B\big)$. Noting that each $\mt(\tilde{q}_w)$ is a vertex in $\cQ_0$, it follows that $\tilde{\Lambda}\varepsilon$ is a projective right $B$-module.

The proof of (2) is similar.
\end{proof}

\section{Stratifying ideals and the {\rm\bf (Fg)} condition}\label{sec:Fg}

We consider the finiteness condition {\bf (Fg)} under which we have a rich theory of support varieties for modules over a finite-dimensional algebra. Our first result is Theorem~\ref{thm:stratifying ideal}, which shows that the ideal $\langle\varepsilon\rangle$ is a stratifying ideal of the stretched algebra $\tilde{\Lambda}$. We start by recalling the definition of a stratifying ideal.

\begin{defin}
Let $A$ be a finite-dimensional algebra and let $e$ be an idempotent in $A$.
The two sided ideal $\langle e\rangle = AeA$ is a stratifying ideal if:
\begin{enumerate}
\item the multiplication map $Ae\otimes_{eAe}eA\rightarrow AeA$ is an isomorphism, and
\item $\Tor^{eAe}_n(Ae,eA)=0$ for all $n>0$.
\end{enumerate}
\end{defin}

It is clear that if the multiplication map $Ae\otimes_{eAe}eA\rightarrow AeA$ is an isomorphism and $Ae$ is a projective right $eAe$-module, then $\langle e\rangle$ is a stratifying ideal.

\begin{thm}\label{thm:stratifying ideal}
Let $\Lambda=K\mathcal{Q}/I$ and let $\tilde{\Lambda}$ be the stretched algebra. Recall that $\varepsilon=\sum_{v\in{\mathcal{Q}_0}}v$ and $B=\varepsilon\tilde{\Lambda}\varepsilon$.
Then $\langle\varepsilon\rangle$ is a stratifying ideal of $\tilde{\Lambda}$.
\end{thm}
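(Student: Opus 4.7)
The plan is to use the remark immediately following the definition of a stratifying ideal. Theorem~\ref{projBmodule}(1) already shows that $\tilde{\Lambda}\varepsilon$ is projective as a right $B$-module, so the Tor-vanishing condition is free. The entire proof therefore reduces to showing that the multiplication map
\[
\mu:\tilde{\Lambda}\varepsilon\otimes_B\varepsilon\tilde{\Lambda}\to\tilde{\Lambda}\varepsilon\tilde{\Lambda}
\]
is an isomorphism; it is clearly surjective, so the issue is injectivity.

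To analyse $\mu$, I would apply the Peirce-type decomposition with respect to $\varepsilon$ and $1-\varepsilon$ to both sides. Writing $\tilde{\Lambda}\varepsilon=B\oplus(1-\varepsilon)\tilde{\Lambda}\varepsilon$ as a left $K$-space and $\varepsilon\tilde{\Lambda}=B\oplus\varepsilon\tilde{\Lambda}(1-\varepsilon)$, the domain of $\mu$ splits into four summands. A direct check, using only that $\varepsilon\in B$, shows that on the target side
\[
\tilde{\Lambda}\varepsilon\tilde{\Lambda}=B\oplus\varepsilon\tilde{\Lambda}(1-\varepsilon)\oplus(1-\varepsilon)\tilde{\Lambda}\varepsilon\oplus(1-\varepsilon)\tilde{\Lambda}\varepsilon\tilde{\Lambda}(1-\varepsilon),
\]
and that $\mu$ restricts to the obvious isomorphism on the first three summands. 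The real content is then that the restriction
\[
\mu':(1-\varepsilon)\tilde{\Lambda}\varepsilon\otimes_B\varepsilon\tilde{\Lambda}(1-\varepsilon)\longrightarrow(1-\varepsilon)\tilde{\Lambda}\varepsilon\tilde{\Lambda}(1-\varepsilon)
\]
is an isomorphism.

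To handle $\mu'$, I would decompose over $w,w'\in\tilde{\mathcal{Q}}_0\setminus\mathcal{Q}_0$. By Proposition~\ref{prop:dimension of wiL}(5), $w\tilde{\Lambda}\varepsilon=\tilde{q}_w B$ and $\varepsilon\tilde{\Lambda}w'=B\tilde{p}_{w'}$, and by Proposition~\ref{prop:iso modules} these are isomorphic as one-sided $B$-modules to $\mt(\tilde{q}_w)B$ and $B\mo(\tilde{p}_{w'})$ respectively. Under these identifications the $(w,w')$-summand of $\mu'$ becomes the natural map
\[
\mt(\tilde{q}_w)B\mo(\tilde{p}_{w'})\longrightarrow\tilde{q}_w B\tilde{p}_{w'},\qquad b\longmapsto\tilde{q}_w b\tilde{p}_{w'},
\]
which is surjective by inspection. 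For injectivity, let $b$ be a nonzero element of $\mt(\tilde{q}_w)B\mo(\tilde{p}_{w'})$; setting $v'=\mt(\tilde{q}_w)$ and $v=\mo(\tilde{p}_{w'})$, Proposition~\ref{prop:theta props}(2) applied to $\tilde{\lambda}=b$ gives $\tilde{q}_w b\neq 0$, and since $\tilde{q}_w b=(\tilde{q}_w b)v$, Proposition~\ref{prop:theta props}(1) then yields $\tilde{q}_w b\tilde{p}_{w'}\neq 0$, as required.

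The main obstacle is not so much any single step as the careful bookkeeping: organising the Peirce decompositions on both sides so that $\mu$ is visibly diagonal, and then recognising the remaining nontrivial block as precisely the situation that Proposition~\ref{prop:theta props} was designed to handle. No additional homological input is required beyond Theorem~\ref{projBmodule} and the structural results of Section~\ref{sec:dtoD}.
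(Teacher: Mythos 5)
Your proposal is correct and follows essentially the same route as the paper: both reduce, via Theorem~\ref{projBmodule}, to the injectivity of the multiplication map, both exploit the descriptions $w\tilde{\Lambda}\varepsilon=\tilde{q}_wB$ and $\varepsilon\tilde{\Lambda}w=B\tilde{p}_w$ from Proposition~\ref{prop:dimension of wiL}(5), and both invoke Proposition~\ref{prop:theta props} for the crucial non-degeneracy. The only difference is organisational: you Peirce-decompose both tensor factors and argue blockwise over pairs $(w,w')$, whereas the paper decomposes only $\tilde{\Lambda}\varepsilon$ and separates the terms by left-multiplying the image by the orthogonal idempotents $\varepsilon$ and $w$.
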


\begin{proof}
From Theorem~\ref{projBmodule}, $\tilde{\Lambda}\varepsilon$ is projective as a right $B$-module. So it suffices to show that the multiplication map
$\psi: \tilde{\Lambda}\varepsilon\otimes_B\varepsilon\tilde{\Lambda}\rightarrow \tilde{\Lambda}\varepsilon\tilde{\Lambda}$
is an isomorphism. It is clear that $\psi$ is a $\tilde{\Lambda}$-$\tilde{\Lambda}$-bimodule homomorphism and is onto. We show that $\psi$ is one-to-one.

Suppose that $\psi(\sum \tilde{\lambda}\varepsilon\otimes_B\varepsilon\tilde{\mu}) = 0$, with $\tilde{\lambda}, \tilde{\mu} \in \tilde{\Lambda}$.
From Proposition~\ref{prop:dimension of wiL}(5), $\tilde{\Lambda}\varepsilon = \varepsilon\tilde{\Lambda}\varepsilon \oplus \big( \oplus_{w \in \tilde{\mathcal{Q}}_0\setminus\mathcal{Q}_0} w\tilde{\Lambda}\varepsilon) = \varepsilon\tilde{\Lambda}\varepsilon \oplus \big( \oplus_{w \in \tilde{\mathcal{Q}}_0\setminus\mathcal{Q}_0} \tilde{q}_w\varepsilon\tilde{\Lambda}\varepsilon)$ so we may write
$$\sum \tilde{\lambda}\varepsilon\otimes_B\varepsilon\tilde{\mu} = \varepsilon\otimes_B \varepsilon\tilde{\nu}+\sum_{w \in \tilde{\mathcal{Q}}_0\setminus\mathcal{Q}_0}\tilde{q}_w\varepsilon\otimes_B \varepsilon\tilde{\nu}_w$$
for some $\tilde{\nu}, \tilde{\nu}_w$ in $\tilde{\Lambda}$.
Then $0 = \psi(\sum\tilde{\lambda}\varepsilon\otimes_B\varepsilon\tilde{\mu}) =
\varepsilon\tilde{\nu} +
\sum_{w \in \tilde{\mathcal{Q}}_0\setminus\mathcal{Q}_0}\tilde{q}_w\tilde{\nu}_w$.
Left multiplication by $\varepsilon$ gives that $\varepsilon\tilde{\nu} = 0$.
For each $w \in \tilde{\mathcal{Q}}_0\setminus{\mathcal Q}_0$, left multiplication by $w$ gives $\tilde{q}_w\tilde{\nu}_w=0$; then
from Proposition~\ref{prop:theta props}, we have that $\mt(\tilde{q}_w)\tilde{\nu}_w=0$.
Thus $\sum \tilde{\lambda}\varepsilon\otimes_B\varepsilon\tilde{\mu} = 0$ and $\psi$ is one-to-one.
Hence $\langle\varepsilon\rangle$ is a stratifying ideal.
\end{proof}

We now study the quotient $\tilde{\Lambda}/\langle\varepsilon\rangle$.
We use the notation introduced in Remark~\ref{rem:w}. In addition, for each
arrow $\alpha$ in $\cQ$, let $\Gamma_\alpha$ denote the following subquiver of $\tilde{\mathcal{Q}}$
$$\xymatrix{
w_1 \ar[r]^{\alpha_2} & w_2\ar[r]^{\alpha_3} & \cdots \ar[r]^{\alpha_{A-1}} &
w_{A-1}
}$$
We have
$$\tilde{\Lambda}/\tilde{\Lambda}\varepsilon\tilde{\Lambda} \cong \oplus_{\alpha\in{\mathcal{Q}_1}}(\tilde{\Lambda}w_1\tilde{\Lambda}+\tilde{\Lambda}w_2\tilde{\Lambda}+ \cdots +\tilde{\Lambda}w_{A-1}\tilde{\Lambda} + \tilde{\Lambda}\varepsilon\tilde{\Lambda})/\tilde{\Lambda}\varepsilon\tilde{\Lambda}.$$
Define
$$X_\alpha=(\tilde{\Lambda}w_1\tilde{\Lambda}+\tilde{\Lambda}w_2\tilde{\Lambda}+ \cdots + \tilde{\Lambda}w_{A-1}\tilde{\Lambda} + \tilde{\Lambda}\varepsilon\tilde{\Lambda})/\tilde{\Lambda}\varepsilon\tilde{\Lambda}$$
so
$$ \tilde{\Lambda} / \tilde{\Lambda}\varepsilon\tilde{\Lambda} \cong \oplus_{\alpha\in{\mathcal{Q}_1}}X_\alpha.$$
Moreover, $X_\alpha \cong K\Gamma_\alpha$ as $K$-algebras.
The following result is now immediate.

\begin{prop}\label{prop:dimX}
Let $\Lambda = K\mathcal{Q}/I$ and let $\tilde{\Lambda}$ be the stretched algebra.
Then $\dim X_{\alpha} = A(A-1)/2$ and
$\dim\tilde{\Lambda} / \langle\varepsilon\rangle = m_1A(A-1)/2$, where $m_1$ is the number of arrows of $\mathcal{Q}$.
\end{prop}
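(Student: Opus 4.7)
The plan is to exploit the isomorphism $X_\alpha \cong K\Gamma_\alpha$ already flagged in the excerpt, and then reduce the computation to counting paths in the quiver $\Gamma_\alpha$. Since $\Gamma_\alpha$ has no relations (it is the path algebra of a subquiver of $\tilde{\mathcal{Q}}$ whose arrows $\alpha_2,\dots,\alpha_{A-1}$ appear in no element of $\tilde{g}^2$, because every generator $\tilde{g}^2_i$ lies in $\varepsilon K\tilde{\cQ}\varepsilon$), a $K$-basis of $K\Gamma_\alpha$ is given by all paths in $\Gamma_\alpha$, trivial paths included.

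First I would enumerate these paths. The quiver $\Gamma_\alpha$ is the linear quiver
$$\xymatrix{w_1 \ar[r]^{\alpha_2} & w_2\ar[r]^{\alpha_3} & \cdots \ar[r]^{\alpha_{A-1}} & w_{A-1}}$$
so for each pair $(i,j)$ with $1\leqslant i\leqslant j\leqslant A-1$ there is precisely one path from $w_i$ to $w_j$, namely $\alpha_{i+1}\cdots\alpha_j$ (interpreted as the trivial path $w_i$ when $i=j$). Counting these ordered pairs gives
$$\dim X_\alpha \;=\; \dim K\Gamma_\alpha \;=\; \sum_{k=1}^{A-1} k \;=\; \frac{A(A-1)}{2}.$$

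Second, I would invoke the direct sum decomposition $\tilde{\Lambda}/\langle\varepsilon\rangle \cong \bigoplus_{\alpha\in\mathcal{Q}_1} X_\alpha$ recorded just before the proposition. Since this is a decomposition as $K$-vector spaces with $m_1$ summands, each of dimension $A(A-1)/2$, we immediately obtain
$$\dim \tilde{\Lambda}/\langle\varepsilon\rangle \;=\; m_1\cdot\frac{A(A-1)}{2}.$$

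There is no real obstacle here: the content of the proposition is purely combinatorial once the identification $X_\alpha\cong K\Gamma_\alpha$ and the decomposition of $\tilde{\Lambda}/\langle\varepsilon\rangle$ are in hand, both of which are stated in the paragraph preceding the proposition. The only minor care needed is to verify, when invoking $X_\alpha \cong K\Gamma_\alpha$, that no relation of $\tilde{I}$ descends to a nontrivial relation among the paths in $\Gamma_\alpha$; but this is automatic because $\tilde{I}$ is generated by the uniform elements $\tilde{g}^2_i \in \varepsilon K\tilde{\cQ}\varepsilon$, so every element of $\tilde{I}$ starts and ends at vertices in $\mathcal{Q}_0$, and hence contributes nothing to the quotient after we mod out by $\tilde{\Lambda}\varepsilon\tilde{\Lambda}$.
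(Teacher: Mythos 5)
Your proposal is correct and follows exactly the route the paper intends: the paper declares the result ``immediate'' from the decomposition $\tilde{\Lambda}/\langle\varepsilon\rangle \cong \oplus_{\alpha\in\mathcal{Q}_1}X_\alpha$ and the isomorphism $X_\alpha\cong K\Gamma_\alpha$, and your path count $\sum_{k=1}^{A-1}k = A(A-1)/2$ for the linear quiver $\Gamma_\alpha$ is precisely the omitted computation. Your extra remark that no generator of $\tilde{I}$ survives in the quotient (since each $\tilde{g}^2_i$ lies in $\varepsilon K\tilde{\cQ}\varepsilon$) is a correct and welcome justification of the identification $X_\alpha\cong K\Gamma_\alpha$ that the paper takes for granted.
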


\begin{thm}
Let $\Lambda=K\mathcal{Q}/I$ and let $\tilde{\Lambda}$ be the stretched algebra.
Then $\tilde{\Lambda}/\langle\varepsilon\rangle$ has a minimal projective
$\tilde{\Lambda}$-$\tilde{\Lambda}$-bimodule resolution
$$0 \rightarrow \tilde{R}^2 \rightarrow \tilde{R}^1 \rightarrow \tilde{R}^0 \rightarrow \tilde{\Lambda}/\langle\varepsilon\rangle\rightarrow 0.$$
\end{thm}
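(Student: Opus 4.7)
The plan is to construct an explicit minimal projective $\tilde{\Lambda}$-bimodule resolution whose terms are
\[
\tilde{R}^0 = \bigoplus_{w \in \tilde{\cQ}_0 \setminus \cQ_0} \tilde{\Lambda}w \otimes w\tilde{\Lambda}, \quad
\tilde{R}^1 = \bigoplus_{\beta \in \tilde{\cQ}_1} \tilde{\Lambda}\mo(\beta) \otimes \mt(\beta)\tilde{\Lambda}, \quad
\tilde{R}^2 = \bigoplus_{\alpha \in \cQ_1} \tilde{\Lambda}\mo(\alpha) \otimes \mt(\alpha)\tilde{\Lambda}.
\]
These are projective $\tilde{\Lambda}$-bimodules of the standard form $\tilde{\Lambda}e \otimes e'\tilde{\Lambda}$. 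Since the bimodule top of $\tilde{\Lambda}/\langle\varepsilon\rangle$ is supported precisely on the vertices in $\tilde{\cQ}_0 \setminus \cQ_0$ (via Proposition~\ref{prop:dimX} and the decomposition $\tilde{\Lambda}/\langle\varepsilon\rangle \cong \bigoplus_{\alpha\in\cQ_1} K\Gamma_\alpha$), the augmentation $d^0 : \tilde{R}^0 \to \tilde{\Lambda}/\langle\varepsilon\rangle$ sending each generator $w\otimes w$ to $w + \langle\varepsilon\rangle$ is the projective bimodule cover.

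I then define $d^1$ by
\[
d^1(\mo(\beta) \otimes \mt(\beta)) = \beta \otimes \mt(\beta) - \mo(\beta) \otimes \beta
\]
for each arrow $\beta$ of $\tilde{\cQ}$, where a summand is discarded whenever its middle idempotent lies in $\cQ_0$ (so the boundary arrows $\beta = \alpha_1$ and $\beta = \alpha_A$ contribute only a single surviving term each). The differential $d^2$ is defined by the telescope
\[
d^2(\mo(\alpha) \otimes \mt(\alpha)) = \sum_{j=1}^{A} \alpha_1 \cdots \alpha_{j-1} \cdot (\mo(\alpha_j) \otimes \mt(\alpha_j)) \cdot \alpha_{j+1} \cdots \alpha_A
\]
for each $\alpha \in \cQ_1$, where $\alpha_1 \cdots \alpha_0 = \mo(\alpha)$ and $\alpha_{A+1} \cdots \alpha_A = \mt(\alpha)$ denote the empty products. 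The equations $d^0 d^1 = 0$ and $d^1 d^2 = 0$ are direct: applying $d^1$ to the telescope produces a sum in which the interior terms cancel in consecutive pairs while the two truncations at $j=1$ and $j=A$ absorb the outer ends.

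For exactness at $\tilde{R}^0$, I take $x \in \ker d^0$ and use the explicit normal forms for $\tilde{\Lambda}w$ and $w\tilde{\Lambda}$ from Proposition~\ref{prop:dimension of wiL} to rewrite $x$ as a $\tilde{\Lambda}$-bimodule combination of the one-step relations (and their boundary versions), each lying in $\Im d^1$. The main obstacle is exactness at $\tilde{R}^1$: I need to show that every element of $\ker d^1$ is a $\tilde{\Lambda}$-bimodule combination of the telescopes $d^2(\mo(\alpha)\otimes\mt(\alpha))$. The argument reduces a kernel element modulo $\Im d^2$ using the same normal forms, and the key point is that the defining relations $\tilde{g}^2_k$ of $\tilde{\Lambda}$ all lie in $\langle\varepsilon\rangle$, so they generate no further bimodule syzygies for the quotient; hence the resolution terminates at $\tilde{R}^2$. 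Injectivity of $d^2$ then follows from Proposition~\ref{prop:theta props}, since each summand of the telescope is of the form $\alpha_1 \cdots \alpha_{j-1} \otimes \alpha_{j+1}\cdots \alpha_A$ (possibly with an interior idempotent) and is nonzero, and distinct $\alpha \in \cQ_1$ feed into disjoint summands of $\tilde{R}^1$.

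Minimality is then automatic since, in each differential, every surviving term carries an arrow factor on one side of the tensor and so lies in $\rad(\tilde{\Lambda}) \tilde{R}^{i-1} + \tilde{R}^{i-1} \rad(\tilde{\Lambda})$. The hardest step is the exactness at $\tilde{R}^1$, where one must carefully track how the stratifying ideal structure of $\langle\varepsilon\rangle$ from Theorem~\ref{thm:stratifying ideal} interacts with the hereditary structure of $\bigoplus_{\alpha\in\cQ_1} K\Gamma_\alpha$ to cap the projective dimension at exactly $2$.
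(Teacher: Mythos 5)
Your complex is, term by term and map by map, the one the paper constructs: your $\tilde{R}^0$ is $\oplus_{\alpha\in\cQ_1}\oplus_{i=1}^{A-1}\tilde{\Lambda}w_i\otimes w_i\tilde{\Lambda}$, your $\tilde{R}^1$ indexed by arrows of $\tilde{\cQ}$ is exactly $\oplus_{\alpha}\bigl(\tilde{\Lambda}v\otimes w_1\tilde{\Lambda}\oplus(\oplus_{i=1}^{A-2}\tilde{\Lambda}w_i\otimes w_{i+1}\tilde{\Lambda})\oplus\tilde{\Lambda}w_{A-1}\otimes v'\tilde{\Lambda}\bigr)$, and your telescoping $d^2$ is the paper's element $z$ (up to signs). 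So the construction is right, and your verifications of $d^0d^1=0$ and $d^1d^2=0$ are fine. The problem is that exactness at $\tilde{R}^0$ and at $\tilde{R}^1$ -- which is the entire content of the theorem -- is asserted rather than proved. In particular, the one substantive justification you offer for the hardest step, namely that ``the defining relations $\tilde{g}^2_k$ all lie in $\langle\varepsilon\rangle$, so they generate no further bimodule syzygies for the quotient,'' is not an argument: the resolution is over $\tilde{\Lambda}^e$, not over $(K\Gamma_\alpha)^e$, and the relations of $\tilde{\Lambda}$ enter the modules $\tilde{\Lambda}w_i$ and $w_i\tilde{\Lambda}$ in an essential way (their dimensions are $i+\dim\tilde{\Lambda}v$ and $(A-i)+\dim v'\tilde{\Lambda}$ by Proposition~\ref{prop:dimension of wiL}, so they depend on all of $\Lambda$). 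The appeal to the ``hereditary structure'' of $\oplus_\alpha K\Gamma_\alpha$ is a red herring for the same reason: a hereditary heuristic would cap the projective dimension at $1$, whereas the correct answer over $\tilde{\Lambda}^e$ is $2$. Likewise, for injectivity of $d^2$ it is not enough that each summand of the telescope is nonzero; you need that the whole bimodule map $\tilde{\Lambda}v\otimes v'\tilde{\Lambda}\to\tilde{R}^1$ is injective, which does follow, but from Proposition~\ref{prop:iso modules}(4) applied to the last component $\tilde{p}_{w_{A-1}}\otimes v'$, not from nonvanishing of generators.

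The paper closes exactly these gaps by dimension counting rather than by normal-form rewriting. It computes $\dim\Ker\Delta^0_\alpha=\dim\tilde{R}^0_\alpha-\dim X_\alpha$ from Propositions~\ref{prop:dimension of wiL} and~\ref{prop:dimX}, and then shows that the sub-bimodule $K=U_1+U_2+U_3$ generated by your $d^1$-images has precisely that dimension via inclusion--exclusion; the delicate point there is identifying $(U_1+U_2)\cap U_3$ as the bimodule generated by $\tilde{p}_{w_1}\otimes\tilde{q}_{w_1}-\tilde{p}_{w_{A-1}}\otimes\tilde{q}_{w_{A-1}}$ (and the cases $A=2$ and $A\geqslant 3$ need separate treatment). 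The same bookkeeping gives $\dim\Ker\Delta^1_\alpha=VV'$, which matches the dimension of the sub-bimodule generated by the telescope $z$, whence $\Ker\Delta^1_\alpha=\langle z\rangle$ and $\Delta^2_\alpha$ is injective for dimension reasons. If you want to keep your direct rewriting strategy, you must actually carry out the reduction of an arbitrary element of $\Ker d^1$ modulo $\Im d^2$ using the normal forms of Proposition~\ref{prop:dimension of wiL}(1),(2) and Proposition~\ref{prop:theta props}; as written, that step is where all of the work lives and none of it is done.
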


\begin{proof}
The main part of this proof is in constructing a minimal projective $\tilde{\Lambda}$-$\tilde{\Lambda}$-bimodule resolution for each algebra $X_\alpha$.

Let $\alpha$ be an arrow in $\cQ$. We keep the notation of this section and of Remark~\ref{rem:w}.
Let $v = \mo(\alpha)$ and $v' = \mt(\alpha)$. Set $\dim\tilde{\Lambda}v=V$ and $\dim v'\tilde{\Lambda}= V'$.

Define the bimodule $\tilde{R}^0_\alpha = \oplus_{i=1}^{A-1}\tilde{\Lambda}w_i\otimes w_i\tilde{\Lambda}$ and the bimodule homomorphism $\Delta^0_\alpha : \tilde{R}^0_\alpha\rightarrow X_\alpha$ by
$w_i\otimes w_i \mapsto w_i+\tilde{\Lambda}\varepsilon\tilde{\Lambda}$ for $i=1,\dots,A-1$.
Using Proposition~\ref{prop:dimension of wiL}, we have
\begin{align*}
\dim\tilde{R}^0_\alpha & = \sum_{i=1}^{A-1}\dim(\tilde{\Lambda}w_i)\dim(w_i\tilde{\Lambda})\\
&= \sum_{i=1}^{A-1}(i+V)((A-i)+V')\\
&= \sum_{i=1}^{A-1}i(A-i) +\sum_{i=1}^{A-1}i(V+V') + (A-1)VV'\\
&= \frac{1}{6}(A-1)A(A+1) + \frac{1}{2}(A-1)A(V+V') + (A-1)VV'.
\end{align*}
So, with Proposition~\ref{prop:dimX}, we have
\begin{align*}
\dim\Ker\Delta^0_\alpha & = \dim\tilde{R}^0_\alpha - \dim X_\alpha\\
&= \frac{1}{6}(A-2)(A-1)A + \frac{1}{2}(A-1)A(V+V') + (A-1)VV'.
\end{align*}

The next step is to find the generators of $\Ker \Delta^0_\alpha$.
Let $K$ be the $\tilde{\Lambda}$-$\tilde{\Lambda}$-bimodule generated by $\{\tilde{p}_{w_1}\otimes w_1, w_{A-1}\otimes\tilde{q}_{w_{A-1}}, w_{i}\otimes\alpha_{i+1}-\alpha_{i+1}\otimes w_{i+1}, \mbox{ for } i=1,\dots,A-2\}$. Clearly $K \subseteq \Ker\Delta^0_\alpha$.
For the reverse inclusion, suppose first that $A=2$.
Set $U_1 = \tilde{\Lambda}\tilde{p}_{w_1} \otimes w_1\tilde{\Lambda}$ and $U_2 = \tilde{\Lambda}w_1 \otimes \tilde{q}_{w_1}\tilde{\Lambda}$, and note that both $\tilde{p}_{w_1}$ and $\tilde{q}_{w_1}$ are arrows in $\tilde{{\mathcal Q}}$. Then $K = U_1 + U_2$. So $\dim K = \dim U_1 + \dim U_2 - \dim (U_1 \cap U_2)$. We see that $U_1 \cap U_2 = \tilde{\Lambda}\tilde{p}_{w_1} \otimes \tilde{q}_{w_1}\tilde{\Lambda}$. So, from Propositions~\ref{prop:iso modules} and \ref{prop:dimension of wiL},
$$\dim K = V(1+V') + (1+V)V' - VV' = V+V'+VV'.$$
So $\dim K = \dim\Ker\Delta^0_{\alpha}$ and thus $K = \Ker\Delta^0_{\alpha}$.

Now suppose that $A \geqslant 3$.
Here we set $U_1=\tilde{\Lambda} \tilde{p}_{w_1}\otimes\ w_1 \tilde{\Lambda}$,
$U_2=\tilde{\Lambda} w_{A-1}\otimes \tilde{q}_{w_{A-1}}\tilde{\Lambda}$
and $U_3=\sum_{i=1}^{A-2}\tilde{\Lambda}( w_{i}\otimes\alpha_{i+1}- \alpha_{i+1}\otimes w_{i+1})\tilde{\Lambda}$. Then $K = U_1 + U_2 + U_3$ so $\dim K = \dim (U_1+U_2) + \dim U_3 - \dim(U_1 + U_2) \cap U_3$. Now, $U_1 \subseteq \tilde{\Lambda}w_1 \otimes w_1\tilde{\Lambda}$ and $U_2 \subseteq \tilde{\Lambda}w_{A-1} \otimes w_{A-1}\tilde{\Lambda}$, so, since $A \geqslant 3$, we have $U_1 \cap U_2 = \{0\}$.
Thus $\dim (U_1+U_2)=\dim U_1+\dim U_2$. Note also that $\tilde{\Lambda}(w_{i}\otimes\alpha_{i+1}- \alpha_{i+1}\otimes w_{i+1})\tilde{\Lambda} \cong \tilde{\Lambda}(w_{i}\otimes w_{i+1})\tilde{\Lambda}$
so $U_3 \cong \oplus_{i=1}^{A-2}\tilde{\Lambda}(w_{i}\otimes w_{i+1})\tilde{\Lambda}$. Then Propositions~\ref{prop:iso modules} and \ref{prop:dimension of wiL} give
\begin{align*}
\dim U_1&= V((A-1)+V')\\
\dim U_2&= ((A-1)+V)V'\\
\dim U_3&= \sum_{i=1}^{A-2}(i+V)((A-(i+1))+V').
\end{align*}
Finally, we can write
$$\tilde{p}_{w_1}\otimes\tilde{q}_{w_1} - \tilde{p}_{w_{A-1}} \otimes \tilde{q}_{w_{A-1}}
= \sum_{j=1}^{A-2} \tilde{p}_{j}(w_j\otimes\alpha_{j+1} - \alpha_{j+1}\otimes w_{j+1})\tilde{q}_{j+1}$$
so that $\tilde{p}_{w_1} \otimes \tilde{q}_{w_1} - \tilde{p}_{w_{A-1}} \otimes \tilde{q}_{w_{A-1}} \in (U_1 + U_2) \cap U_3$. Indeed, this element generates $(U_1 + U_2) \cap U_3$ so $\dim((U_1 + U_2) \cap U_3)= VV'$.
Hence
\begin{align*}
\dim K &=
V((A-1)+V')+((A-1)+V)V'+\left(\sum_{i=1}^{A-2}(i+V)((A-(i+1))+V')\right)-VV'\\
&=(A-1)(V+V')+VV'+\sum_{i=1}^{A-2}i(A-i-1) + \sum_{i=1}^{A-2}i(V+V') + \sum_{i=1}^{A-2}VV' \\
&=\sum_{i=1}^{A-2}i(A-i-1) + \sum_{i=1}^{A-1}i(V+V') + \sum_{i=1}^{A-1}VV' \\
&= \frac{1}{6}(A-2)(A-1)A + \frac{1}{2}(A-1)A(V+V') + (A-1)VV'\\
&=\dim \Ker\Delta^0_\alpha
\end{align*}
and so $K = \Ker\Delta^0_\alpha$.

Next we define the bimodule $\tilde{R}^1_\alpha = \tilde{\Lambda}v\otimes w_1\tilde{\Lambda}
\oplus (\oplus_{i=1}^{A-2}\tilde{\Lambda}w_i\otimes w_{i+1}\tilde{\Lambda}) \oplus \tilde{\Lambda}w_{A-1}\otimes v'\tilde{\Lambda}$ and the bimodule homomorphism
$\Delta^1_\alpha:\tilde{R}^1_\alpha\rightarrow\tilde{R}^0_\alpha$ by
$$\begin{cases}
v\otimes w_1\mapsto \tilde{p}_{w_1}\otimes w_1\\
w_i\otimes {w_{i+1}}\mapsto w_{i}\otimes\alpha_{i+1}-\alpha_{i+1}\otimes w_{i+1}\\
w_{A-1}\otimes v'\mapsto {w_{A-1}}\otimes \tilde{q}_{w_{A-1}}
\end{cases}$$
where $\tilde{p}_{w_1}\otimes w_1$ lies in the $w_1\otimes w_1$-component of $\tilde{R}^0_\alpha$,
$w_{A-1}\otimes \tilde{q}_{w_{A-1}}$ lies in the $w_{A-1}\otimes w_{A-1}$-component of $\tilde{R}^0_\alpha$,
and, for $i = 1, \dots , A-2$,
$w_i\otimes\alpha_{i+1}$ lies in the $w_i\otimes w_i$-component of $\tilde{R}^0_\alpha$, and
$\alpha_{i+1}\otimes w_{i+1}$ lies in the $w_{i+1}\otimes w_{i+1}$-component of $\tilde{R}^0_\alpha$.
Then
\begin{align*}
\dim \tilde{R}^1_\alpha
&= V((A-1)+V')+ \sum_{i=1}^{A-2}(i+V)((A-(i+1))+V')+((A-1)+V)V'\\
&= \sum_{i=1}^{A-1}i(V+V') + AVV'+\sum_{i=1}^{A-2}i(A-(i+1))\\
&= \frac{1}{2}A(A-1)(V+V') + AVV' + \frac{1}{6}A(A-1)(A-2)
\end{align*}
and hence $\dim\Ker\Delta^1_\alpha=\dim \tilde{R}^1_\alpha-\dim\Ker\Delta^0_\alpha=VV'$.
To find $\Ker \Delta^1_\alpha$, let
$$z = (v\otimes \tilde{q}_{w_1},-\tilde{p}_{w_1}\otimes \tilde{q}_{w_2}, \dots,-\tilde{p}_{w_i}\otimes \tilde{q}_{w_{i+1}},\dots, -\tilde{p}_{w_{A-2}}\otimes \tilde{q}_{w_{A-1}},-\tilde{p}_{w_{A-1}}\otimes v').$$
Then $z$ is in $\Ker\Delta^1_\alpha$ and generates a sub-bimodule of $\Ker\Delta^1_\alpha$ of dimension $VV'$. Hence $\Ker\Delta^1_\alpha = \langle z\rangle$.

Now define the bimodule $\tilde{R}^2_\alpha=\tilde{\Lambda}v\otimes v'\tilde{\Lambda}$ and the bimodule homomorphism $\Delta^2_\alpha : \tilde{R}^2_\alpha\rightarrow\tilde{R}^1_\alpha$ by
$v\otimes v'\mapsto z$. Then $\dim \tilde{R}^2_\alpha=VV'$ and so $\dim \Ker \Delta^2_\alpha=0$.
Thus, $X_\alpha$ has minimal projective $\tilde{\Lambda}$-$\tilde{\Lambda}$-bimodule resolution
$$0\longrightarrow \tilde{R}^2_\alpha\stackrel{\Delta^2_\alpha}{\longrightarrow} \tilde{R}^1_\alpha\stackrel{\Delta^1_\alpha}{\longrightarrow} \tilde{R}^0_\alpha\stackrel{\Delta^0_\alpha}{\longrightarrow} X_\alpha\longrightarrow 0.$$
The result now follows.
\end{proof}

\begin{cor}\label{cor:pdim}
Let $\Lambda = K\mathcal{Q}/I$ and let $\tilde{\Lambda}$ be the stretched algebra. Then $\pdim_{\tilde{\Lambda}^e} \tilde{\Lambda}/\langle\varepsilon\rangle=2$.
\end{cor}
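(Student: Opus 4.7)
The plan is to read off the result directly from the preceding theorem, which has already done the substantive work. That theorem exhibits a minimal projective $\tilde{\Lambda}$-$\tilde{\Lambda}$-bimodule resolution
$$0 \to \tilde{R}^2 \to \tilde{R}^1 \to \tilde{R}^0 \to \tilde{\Lambda}/\langle\varepsilon\rangle \to 0$$
of length two, which immediately yields $\pdim_{\tilde{\Lambda}^e} \tilde{\Lambda}/\langle\varepsilon\rangle \leqslant 2$.

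For the matching lower bound I would invoke minimality. In the notation of the theorem, $\tilde{R}^2 = \bigoplus_{\alpha \in \cQ_1}\tilde{R}^2_\alpha$, where $\tilde{R}^2_\alpha = \tilde{\Lambda}v \otimes v'\tilde{\Lambda}$ with $v = \mo(\alpha)$ and $v' = \mt(\alpha)$. Since $v$ and $v'$ are nonzero (primitive) idempotents of $\tilde{\Lambda}$, each summand $\tilde{\Lambda}v \otimes v'\tilde{\Lambda}$ is a nonzero projective $\tilde{\Lambda}^e$-module, so $\tilde{R}^2 \neq 0$ (the corollary implicitly assumes $\cQ$ has at least one arrow and $A \geqslant 2$, otherwise $\tilde{\Lambda}/\langle\varepsilon\rangle = 0$). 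Because the resolution above is minimal as a resolution of $\tilde{\Lambda}^e$-modules, the projective dimension of $\tilde{\Lambda}/\langle\varepsilon\rangle$ equals the length of the resolution, giving $\pdim_{\tilde{\Lambda}^e} \tilde{\Lambda}/\langle\varepsilon\rangle = 2$.

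There is no real obstacle here; the corollary is a bookkeeping consequence of the preceding theorem. The only point that calls for any comment is the nonvanishing of $\tilde{R}^2$, and this is immediate from the fact that projective bimodules of the form $\tilde{\Lambda}e \otimes e'\tilde{\Lambda}$ attached to primitive idempotents $e, e'$ are always nonzero. No independent calculation of syzygies or dimensions is needed beyond what is already carried out in the proof of the theorem.
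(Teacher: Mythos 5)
Your argument is correct and is exactly how the paper intends the corollary to be read: the preceding theorem already provides a minimal projective $\tilde{\Lambda}$-$\tilde{\Lambda}$-bimodule resolution of length two, so the upper bound is immediate and minimality together with $\tilde{R}^2 \neq 0$ gives equality. Your aside about the degenerate cases ($\cQ$ with no arrows, or $A=1$, where $\tilde{\Lambda}/\langle\varepsilon\rangle = 0$) is a reasonable observation that the paper leaves implicit, but otherwise your proof matches the paper's (unstated) one.
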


We are now in a position to compare the Hochschild cohomology rings of $\Lambda$ and the stretched algebra $\tilde{\Lambda}$. We assume for the remainder of this section that $K$ is an algebraically closed field, and recall, for a finite-dimensional $K$-algebra $A$, that we have the natural ring homomorphism $A/\rrad \otimes_A -: \HH^*(A)\rightarrow E(A)$.

\begin{defin}\cite{EHSST}\label{defin:fg}
Let $A$ be an indecomposable finite-dimensional algebra over an algebraically closed field $K$. Then $A$ has
{\bf (Fg)} if $A$ satisfies the following two conditions:
\begin{enumerate}
\item[{\bf (Fg1)}] There is a commutative Noetherian graded subalgebra $H$ of $\HH^*(A)$ such that $H^0 = \HH^0(A)$.
\item[{\bf (Fg2)}] $E(A)$ is a finitely generated $H$-module.
\end{enumerate}
\end{defin}

As a consequence, if $A$ has {\bf (Fg)} then both $\HH^*(A)$ and $E(A)$ are finitely generated as
$K$-algebras. Moreover, it was shown in \cite[Proposition 2.5(a)]{EHSST}, that if $A$ has {\bf (Fg)} then $A$ is Gorenstein. In \cite{N}, Nagase studied the finiteness condition {\bf (Fg)} for Nakayama algebras,
proving in \cite[Corollary 10]{N} that a Nakayama algebra is Gorenstein if and only if it satisfies {\bf (Fg)}. Stratifying ideals played a key role in this work; we use the following result from \cite{N}.

\begin{prop}\cite[Proposition 6]{N}
Let $A$ be a finite-dimensional algebra over an algebraically closed field $K$ with a stratifying ideal $\langle e\rangle$.
Suppose $\pdim_{A^e}A/\langle e\rangle < \infty$. Then we have:
\begin{enumerate}
\item $\HH^{\geqslant n}(A)\cong\HH^{\geqslant n}(eAe)$ as graded algebras, where $n=\pdim_{A^e} A/\langle e\rangle + 1$,
\item $A$ satisfies {\bf (Fg)} if and only if $eAe$ satisfies {\bf (Fg)},
\item $A$ is Gorenstein if and only if $eAe$ is Gorenstein.
\end{enumerate}
\end{prop}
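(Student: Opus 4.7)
My plan is to first establish part (1), since the graded algebra isomorphism $\HH^{\geqslant n}(A)\cong\HH^{\geqslant n}(eAe)$ supplies the key mechanism for transferring (Fg) and the Gorenstein property in parts (2) and (3). The starting point for (1) is the short exact sequence of $A$-bimodules $0\to\langle e\rangle\to A\to A/\langle e\rangle\to 0$. Applying $\Hom_{A^e}(-,A)$ produces the long exact sequence
\[
\cdots\to\Ext^i_{A^e}(A/\langle e\rangle,A)\to\HH^i(A)\to\Ext^i_{A^e}(\langle e\rangle,A)\to\Ext^{i+1}_{A^e}(A/\langle e\rangle,A)\to\cdots
\]
Since $\pdim_{A^e}A/\langle e\rangle=n-1$, the flanking terms vanish for $i\geqslant n$, yielding $\HH^i(A)\cong\Ext^i_{A^e}(\langle e\rangle,A)$ in that range.

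The next step is to identify $\Ext^i_{A^e}(\langle e\rangle,A)$ with $\HH^i(eAe)$ using the stratifying hypothesis. Because $\langle e\rangle\cong Ae\otimes_{eAe}eA$ and $\Tor^{eAe}_k(Ae,eA)=0$ for $k>0$, we have $Ae\otimes^{\mathbf{L}}_{eAe}eA\simeq\langle e\rangle$ in the derived category of $A$-bimodules. The tensor-hom adjunction between the functor $Ae\otimes^{\mathbf{L}}_{eAe}(-)\otimes^{\mathbf{L}}_{eAe}eA$, from $(eAe)^e$-modules to $A^e$-modules, and its right adjoint $e(-)e$ then gives
\[
\Ext^i_{A^e}(\langle e\rangle,A)\;\cong\;\Ext^i_{(eAe)^e}(eAe,eAe)\;=\;\HH^i(eAe).
\]
Combined with the previous step, this yields a graded $K$-vector space isomorphism $\HH^i(A)\cong\HH^i(eAe)$ in degrees $\geqslant n$. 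Upgrading this to a graded algebra isomorphism requires checking compatibility of both the connecting isomorphism and the adjunction isomorphism with Yoneda products; this is where one appeals to the naturality of each construction, and it constitutes the technical core of (1).

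With (1) established, part (2) becomes a matter of transferring the (Fg1) subalgebra and the (Fg2) finite generation between $A$ and $eAe$. If $H\subseteq\HH^*(A)$ witnesses (Fg) for $A$, then its image in $\HH^{\geqslant n}(eAe)$ under the isomorphism from (1), extended by $\HH^0(eAe)$, is a candidate commutative Noetherian subalgebra for $eAe$, and the reverse construction works analogously. Finite generation of $E(eAe)$ is then compared with that of $E(A)$ via the Schur-type functor $e(-)\colon \mod A\to \mod eAe$, noting that the simple $A$-modules killed by $\langle e\rangle$ are exactly the simples of $A/\langle e\rangle$ and have bounded projective dimension (inherited from $\pdim_{A^e}A/\langle e\rangle<\infty$), so the ``extra'' part of $E(A)$ is finitely controlled and does not obstruct the transfer. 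Part (3) proceeds in parallel: Gorensteinness reduces to a finiteness condition on injective dimensions, which is preserved by the stratifying structure together with the bimodule projective dimension bound on $A/\langle e\rangle$. The main obstacle I foresee is the Yoneda-product compatibility in (1), together with the careful reassembly in (2) of finite generation of the Ext algebra across the decomposition of simples according to whether they are killed by $\langle e\rangle$.
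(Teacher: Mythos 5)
A preliminary remark: the paper does not prove this proposition at all --- it is quoted from Nagase \cite{N} and used as a black box --- so there is no internal proof to measure your argument against; I can only judge it on its own terms. On those terms, your argument for (1) is the standard one and is essentially correct: the long exact sequence obtained by applying $\Hom_{A^e}(-,A)$ to $0\to\langle e\rangle\to A\to A/\langle e\rangle\to 0$ kills the outer terms in degrees $\geqslant n$, and the derived adjunction between $Ae\otimes^{\mathbf{L}}_{eAe}(-)\otimes^{\mathbf{L}}_{eAe}eA$ and the exact functor $e(-)e$, together with the two conditions defining a stratifying ideal (which say precisely that $Ae\otimes^{\mathbf{L}}_{eAe}eA\simeq\langle e\rangle$), identifies $\Ext^i_{A^e}(\langle e\rangle,A)$ with $\HH^i(eAe)$ in all degrees. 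You correctly flag multiplicativity as the remaining issue, but you do not address it, and it is not a formality: one must show that the composite $\HH^i(A)\to\Ext^i_{A^e}(\langle e\rangle,A)\to\HH^i(eAe)$ is a ring homomorphism (this is where the real work lies in Nagase's and K\"onig--Nagase's treatment), since an isolated vector-space isomorphism in each degree would not suffice for parts (2) and (3).

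The genuine gaps are in (2) and (3). For (2), transporting the subalgebra $H$ is not as immediate as you suggest: the image of $H^{\geqslant n}$ adjoined to $\HH^0(eAe)$ is indeed a commutative graded subalgebra, but its Noetherianity requires an argument (for instance, that $H$ is module-finite over the subalgebra generated by $H^0$ and sufficiently high powers of homogeneous generators of $H$, followed by an Eakin--Nagata/Artin--Tate step), and the comparison of $E(A)$ with $E(eAe)$ as modules over the two subalgebras requires the one-sided statement $\Ext^j_A(S,S')\cong\Ext^j_{eAe}(Se,S'e)$ for large $j$ --- exactly the eventually-homological-isomorphism result of \cite{PSS} that the paper quotes separately --- together with its compatibility with the ring isomorphism of (1); you gesture at the decomposition of simples but supply neither ingredient. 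Part (3) as written is a restatement of the claim rather than a proof. The natural route, $\Ext^m_A(X,A)\cong\Ext^m_{eAe}(Xe,Ae)$ for large $m$, runs into the problem that under the general hypotheses of the proposition $Ae$ need not be projective, nor a priori of finite injective dimension, over $eAe$; note that the paper's own Theorem~\ref{injectivedim} obtains only one inequality, and only because Theorem~\ref{projBmodule} supplies projectivity of $\tilde{\Lambda}\varepsilon$ in that special situation. So (3) needs an actual argument (in both directions), not a remark that it ``proceeds in parallel.''
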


Combining this with Corollary~\ref{cor:pdim} gives the following result for stretched algebras.

\begin{thm}\label{thm:fg}
Let $K$ be an algebraically closed field. Let $\Lambda = K\mathcal{Q}/I$ and let $\tilde{\Lambda}$ be the stretched algebra, so that $\langle\varepsilon\rangle$ is a stratifying ideal of $\tilde{\Lambda}$.
Then:
\begin{enumerate}
\item $\HH^{\geqslant 3}({\Lambda})\cong \HH^{\geqslant 3}(\tilde{\Lambda})$ as graded algebras.
\item $\tilde{\Lambda}$ satisfies {\bf (Fg)} if and only if $\Lambda$ satisfies {\bf (Fg)}.
\item $\tilde{\Lambda}$ is Gorenstein if and only if $\Lambda$ is Gorenstein.
\end{enumerate}
\end{thm}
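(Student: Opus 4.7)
My plan is to observe that all the heavy machinery has already been developed in the preceding sections, so the proof of Theorem~\ref{thm:fg} should amount to assembling three ingredients and invoking Nagase's result \cite[Proposition 6]{N}. First I would take stock: Theorem~\ref{thm:stratifying ideal} tells us that $\langle\varepsilon\rangle$ is a stratifying ideal of $\tilde{\Lambda}$, Corollary~\ref{cor:pdim} gives $\pdim_{\tilde{\Lambda}^e}\tilde{\Lambda}/\langle\varepsilon\rangle = 2$, and Theorem~\ref{thm:varepsilon} gives the algebra isomorphism $\Lambda \cong \varepsilon\tilde{\Lambda}\varepsilon$. These are precisely the hypotheses needed to apply Nagase's proposition with $A = \tilde{\Lambda}$ and $e = \varepsilon$.

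Next I would plug in: since $\pdim_{\tilde{\Lambda}^e}\tilde{\Lambda}/\langle\varepsilon\rangle = 2 < \infty$, the integer $n$ in Nagase's statement is $n = 2+1 = 3$. Part (1) of Nagase's proposition then yields $\HH^{\geqslant 3}(\tilde{\Lambda}) \cong \HH^{\geqslant 3}(\varepsilon\tilde{\Lambda}\varepsilon)$ as graded algebras, and transporting along the isomorphism $\varepsilon\tilde{\Lambda}\varepsilon \cong \Lambda$ of Theorem~\ref{thm:varepsilon} gives conclusion (1). Parts (2) and (3) follow in the same way: Nagase gives the equivalences of (Fg) and of the Gorenstein property between $\tilde{\Lambda}$ and $\varepsilon\tilde{\Lambda}\varepsilon$, and Theorem~\ref{thm:varepsilon} again lets us replace the corner algebra with $\Lambda$.

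The only thing to be slightly careful about is the standing indecomposability hypothesis in Definition~\ref{defin:fg}. I would note briefly that the stretched algebra construction preserves the block decomposition of $\Lambda$ (the construction is carried out arrow by arrow, so a block of $\Lambda$ is stretched to a block of $\tilde{\Lambda}$ and conversely $\varepsilon$ cuts blocks to blocks under $\Lambda \cong \varepsilon\tilde{\Lambda}\varepsilon$). Hence one may reduce to the indecomposable case and apply Nagase block by block.

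There is essentially no substantive obstacle here, since the two genuinely new inputs — the stratifying ideal property and the projective dimension being $2$ — have already been established in the section. The proof of Theorem~\ref{thm:fg} is therefore a short assembly: quote Theorems~\ref{thm:varepsilon} and~\ref{thm:stratifying ideal}, quote Corollary~\ref{cor:pdim}, apply \cite[Proposition 6]{N}, and read off the three conclusions.
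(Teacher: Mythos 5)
Your proposal matches the paper's argument exactly: the paper simply combines Nagase's Proposition 6 with Theorem~\ref{thm:stratifying ideal}, Corollary~\ref{cor:pdim} (giving $n=2+1=3$) and the isomorphism $\Lambda\cong\varepsilon\tilde{\Lambda}\varepsilon$ of Theorem~\ref{thm:varepsilon}. The paper gives no further detail (it states the theorem follows by "combining" these results), so your assembly, including the sensible aside on indecomposability, is the intended proof.
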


More recently, Psaroudakis, Skarts{\ae}terhagen and Solberg \cite{PSS} considered this finiteness condition for recollements of abelian categories, introducing the concept of an eventually homological isomorphism.
In particular, for a finite-dimensional algebra $A$ with an idempotent $e$ over an algebraically closed field $K$, they
determine when the functor $\res_e: \mod A \to \mod eAe$ in a recollement of abelian categories
is an eventually homological isomorphism.

\begin{defin}\cite[Section 3]{PSS}
Given a functor $F:\mathcal{B}\rightarrow\mathcal{C}$ between abelian categories and an integer $t$, the functor $F$ is called a $t$-homological isomorphism if there is a group isomorphism $$\Ext^j_{\mathcal{B}}(B,B')\cong\Ext^j_{\mathcal{C}}(F(B),F(B'))$$
for every pair of objects $B,B'$ in $\mathcal{B}$, and every $j>t$. Note that we do not require these isomorphisms to be induced by the functor $F$. If $F$ is a $t$-homological isomorphism for some $t$, then we say that $F$ is an eventually homological isomorphism.
\end{defin}

\begin{prop}\cite[Lemma 8.23(ii) and proof]{PSS}\label{prop:pss}
Let $A$ be a finite-dimensional algebra over an algebraically closed field $K$. Suppose that $\langle e\rangle$ is a stratifying ideal in $A$. Then the following are equivalent:
\begin{enumerate}
\item $\pdim_{A^e}A/\langle e\rangle < \infty$.
\item The functor $\res_e:\mod A\rightarrow\mod eAe$ is an eventually homological isomorphism.
\end{enumerate}
Moreover, if $\pdim_{A^e}A/\langle e\rangle = t < \infty$ then the functor $\res_e$ is a $t$-homological isomorphism.
\end{prop}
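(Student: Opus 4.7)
The plan is to leverage the recollement induced by the stratifying ideal $\langle e\rangle$ and to set up a natural long exact sequence that compares $\Ext^*_A$ with $\Ext^*_{eAe}$ and whose error term is controlled by Ext groups of $A/\langle e\rangle$-modules. First I would record the recollement of module categories. Since $\langle e\rangle$ is stratifying, the functor $\ell = -\otimes_{eAe} eA : \mod eAe \to \mod A$ is exact (by the $\Tor$-vanishing in the definition), is fully faithful, and is left adjoint to $\res_e = (-)e$; being a left adjoint to an exact functor, $\ell$ sends projectives to projectives, so the adjunction descends to all Ext groups: $\Ext^j_A(\ell X, N) \cong \Ext^j_{eAe}(X, \res_e N)$ for all $j \geqslant 0$. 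The recollement gives, in the derived category, a natural distinguished triangle $\ell\res_e M \to M \to \bar M$ for each $M \in \mod A$, where $\bar M = M/M\langle e\rangle \in \mod(A/\langle e\rangle)$. Applying $\mathbf{R}\Hom_A(-,N)$ and the adjunction yields the long exact sequence
$$\cdots \to \Ext^j_A(\bar M, N) \to \Ext^j_A(M, N) \to \Ext^j_{eAe}(\res_e M, \res_e N) \to \Ext^{j+1}_A(\bar M, N) \to \cdots,$$
which is the main computational tool.

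For (1)$\Rightarrow$(2) together with the quantitative ``moreover'', suppose $\pdim_{A^e} A/\langle e\rangle = t$. The heart of the argument is to show that $\pdim_A \bar M \leqslant t$ for every $\bar M \in \mod(A/\langle e\rangle)$: a projective bimodule resolution of $A/\langle e\rangle$ of length $t$ consists of summands of $A \otimes A$, each of which is free as a right $A$-module, and tensoring with $\bar M$ over $A$ on the left produces a complex of projective right $A$-modules augmenting to $\bar M$. Acyclicity of this complex amounts to $\Tor^A_i(\bar M, A/\langle e\rangle) = 0$ for $i \geqslant 1$, which follows from the short exact sequence $0 \to AeA \to A \to A/\langle e\rangle \to 0$ together with the stratifying identification $AeA \cong Ae \otimes_{eAe} eA$: this gives $\bar M \otimes_A AeA \cong \bar M e \otimes_{eAe} eA = 0$, and the $\Tor$-vanishing of $Ae$ and $eA$ over $eAe$ extends the computation to higher $\Tor$. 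Hence $\Ext^j_A(\bar M, N) = 0$ for $j > t$, and the displayed long exact sequence collapses to the required isomorphism $\Ext^j_A(M, N) \cong \Ext^j_{eAe}(\res_e M, \res_e N)$ for all $j > t$.

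For (2)$\Rightarrow$(1), assume $\res_e$ is a $t$-homological isomorphism. Substituting $M = A/\langle e\rangle$ into the comparison gives $\res_e M = 0$, so $\Ext^j_A(A/\langle e\rangle, N) = 0$ for every $N \in \mod A$ and every $j > t$, forcing $\pdim_A A/\langle e\rangle \leqslant t$. The symmetric argument in the left module category gives $\pdim_{A^\op} A/\langle e\rangle < \infty$. Promoting these to a bound on $\pdim_{A^e} A/\langle e\rangle$ requires using the stratifying hypothesis once more: the $\Tor$-vanishing of $Ae$ and $eA$ over $eAe$ enters a change-of-rings spectral sequence comparing a bar resolution of $A/\langle e\rangle$ over $A^e$ with its resolutions over $A$ and over $A^\op$.

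The main obstacle I anticipate is precisely this final step of (2)$\Rightarrow$(1): passing from finite one-sided projective dimensions to a finite bimodule projective dimension is not formal in general, and it is here that the stratifying hypothesis has to enter in an essential way. Everything else in the argument is a diagram chase in the displayed long exact sequence together with routine adjunction identities.
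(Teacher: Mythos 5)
First, a remark on context: the paper does not prove this proposition at all --- it is quoted verbatim from Psaroudakis--Skarts{\ae}terhagen--Solberg \cite[Lemma 8.23(ii)]{PSS} --- so your attempt can only be judged on its own merits, not against an internal argument.

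Your forward direction contains one genuinely good piece and one genuine error. The good piece is the key lemma that $\pdim_A \bar{M}\leqslant t$ for every $\bar{M}\in\mod A/\langle e\rangle$: tensoring a length-$t$ projective bimodule resolution of $A/\langle e\rangle$ with $\bar{M}$, and killing the higher $\Tor^A_i(\bar{M},A/\langle e\rangle)$ via $AeA\cong Ae\otimes_{eAe}eA$ and $\bar{M}e=0$, is correct and is exactly where the stratifying hypothesis earns its keep. The error is your opening claim that $\ell=-\otimes_{eAe}eA$ is exact ``by the Tor-vanishing in the definition'': that vanishing is only of $\Tor^{eAe}_n(Ae,eA)$, i.e.\ against the single module $Ae$; it does not make $eA$ flat over $eAe$, and for a general stratifying ideal $\ell$ is only right exact. (In the present paper $\varepsilon\tilde{\Lambda}$ happens to be projective as a left $B$-module, but that is extra structure, not part of the definition.) Consequently your adjunction isomorphism $\Ext^j_A(\ell X,N)\cong\Ext^j_{eAe}(X,\res_e N)$ for all $j$, and your distinguished triangle $\ell\res_e M\to M\to\bar{M}$ with third term a module, are both false in general: the correct third term is the cone $C$ of the derived counit $Me\otimes^{\mathbf{L}}_{eAe}eA\to M$, a complex in non-positive degrees whose cohomologies are the kernel and cokernel of the counit together with the $\Tor^{eAe}_j(Me,eA)$, $j\geqslant 1$. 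The argument is salvageable --- applying $(-)e$ shows every cohomology of $C$ is an $A/\langle e\rangle$-module, your key lemma bounds each of their projective dimensions by $t$, and since $C$ sits in degrees $\leqslant 0$ the hypercohomology spectral sequence still gives $\Hom_{D(A)}(C,N[j])=0$ for $j>t$ --- but the long exact sequence as you wrote it, with the single term $\Ext^j_A(\bar M,N)$, is not a valid statement.

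The converse direction is not proved. Substituting $M=A/\langle e\rangle$ does yield $\pdim_A A/\langle e\rangle\leqslant t$ as a \emph{right} module, but hypothesis (2) concerns only $\mod A$ (right modules), so ``the symmetric argument in the left module category'' is not available without additional input; and the passage from one-sided projective dimensions to $\pdim_{A^e}A/\langle e\rangle<\infty$, which you yourself flag as the main obstacle, is precisely the content of the implication and is left undone. Since this is the harder half of the equivalence, the proposal as it stands establishes only (1)$\Rightarrow$(2) (after the repair above), and for the full statement one should still defer to the proof in \cite{PSS}.
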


We come to the final result of this section.

\begin{thm}\label{injectivedim}
Let $K$ be an algebraically closed field. Let $\Lambda=K\mathcal{Q}/I$ and let $\tilde{\Lambda}$ be the stretched algebra, so that $\langle\varepsilon\rangle$ is a stratifying ideal of $\tilde{\Lambda}$.
Then the functor $\res_\varepsilon:\mod \tilde{\Lambda}\rightarrow\mod \varepsilon\tilde{\Lambda}\varepsilon$ is a 2-homological isomorphism and hence an eventually homological isomorphism.
Moreover, $\idim_{\tilde{\Lambda}}\tilde{\Lambda}\leqslant \sup\{\idim_\Lambda \Lambda, 2\}$.
\end{thm}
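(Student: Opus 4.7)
My plan is to obtain the first assertion directly by combining ingredients already on the table. Theorem~\ref{thm:stratifying ideal} establishes that $\langle\varepsilon\rangle$ is a stratifying ideal of $\tilde{\Lambda}$, and Corollary~\ref{cor:pdim} gives $\pdim_{\tilde{\Lambda}^e}\tilde{\Lambda}/\langle\varepsilon\rangle = 2$. Feeding these into Proposition~\ref{prop:pss} with $t = 2$ immediately yields that $\res_\varepsilon : \mod \tilde{\Lambda} \to \mod \varepsilon\tilde{\Lambda}\varepsilon$ is a $2$-homological isomorphism, and hence an eventually homological isomorphism. This half is essentially a citation.

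For the injective dimension bound I would set $n = \sup\{\idim_\Lambda\Lambda, 2\}$ and aim to show that $\Ext^j_{\tilde{\Lambda}}(M, \tilde{\Lambda}) = 0$ for every right $\tilde{\Lambda}$-module $M$ and every $j > n$; a standard characterisation of injective dimension for a finite-dimensional algebra will then deliver $\idim_{\tilde{\Lambda}}\tilde{\Lambda} \leqslant n$. Since $j > 2$, the $2$-homological isomorphism just obtained supplies
$$\Ext^j_{\tilde{\Lambda}}(M, \tilde{\Lambda}) \cong \Ext^j_{B}(M\varepsilon, \tilde{\Lambda}\varepsilon),$$
where $B = \varepsilon\tilde{\Lambda}\varepsilon$, so it suffices to force the right-hand side to vanish for $j > \idim_\Lambda\Lambda$.

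For this vanishing I would invoke Theorem~\ref{projBmodule}, which says that $\tilde{\Lambda}\varepsilon$ is a projective right $B$-module. Identifying $B$ with $\Lambda$ via Theorem~\ref{thm:varepsilon} realises $\tilde{\Lambda}\varepsilon$ as a summand of a free right $\Lambda$-module, and so $\idim_B \tilde{\Lambda}\varepsilon \leqslant \idim_\Lambda\Lambda$ by the standard fact that injective dimension passes to direct summands. Consequently $\Ext^j_B(M\varepsilon, \tilde{\Lambda}\varepsilon) = 0$ whenever $j > \idim_\Lambda\Lambda$, which combined with the displayed isomorphism closes the argument. I do not expect any substantial obstacle here; the only point that needs care is to verify that the right $B$-action and the right $\Lambda$-action on $\tilde{\Lambda}\varepsilon$ are compatibly identified through the algebra isomorphism of Theorem~\ref{thm:varepsilon}, which is built into the construction of $\theta$.
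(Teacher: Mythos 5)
Your proposal is correct and follows essentially the same route as the paper: cite Corollary~\ref{cor:pdim} and Proposition~\ref{prop:pss} for the $2$-homological isomorphism, then use it in degree $m = \max\{\idim_\Lambda\Lambda,2\}+1$ together with the projectivity of $\tilde{\Lambda}\varepsilon$ over $B=\varepsilon\tilde{\Lambda}\varepsilon$ (Theorem~\ref{projBmodule}) to kill $\Ext^m_{\tilde{\Lambda}}(-,\tilde{\Lambda})$. The only cosmetic difference is that the paper explicitly notes the inequality is vacuous when $\idim_\Lambda\Lambda=\infty$, which your formulation handles implicitly.
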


\begin{proof}
From Corollary~\ref{cor:pdim} and Proposition~\ref{prop:pss}, the functor $\res_\varepsilon:\mod \tilde{\Lambda}\rightarrow\mod \varepsilon\tilde{\Lambda}\varepsilon$ is a 2-homological isomorphism.

The inequality certainly holds if $\Lambda$ has infinite injective dimension, so assume $\idim_\Lambda \Lambda = n < \infty$ and let $m=\max \{\idim_\Lambda \Lambda, 2\}+1$.
Then
$$\Ext^m_{\tilde{\Lambda}}(X,Y)\cong\Ext^m_{\varepsilon\tilde{\Lambda}\varepsilon}(\res_\varepsilon(X),\res_\varepsilon(Y))$$
for all $X,Y\in\mod\tilde{\Lambda}$. Setting $Y=\tilde{\Lambda}$ gives
$$\Ext^m_{\tilde{\Lambda}}(X,\tilde{\Lambda}) \cong \Ext^m_{\varepsilon\tilde{\Lambda}\varepsilon}(\res_\varepsilon(X),\res_\varepsilon(\tilde{\Lambda})) \cong \Ext^m_{\varepsilon\tilde{\Lambda}\varepsilon}(\res_\varepsilon (X),\tilde{\Lambda}\varepsilon).$$
From Theorem~\ref{projBmodule}(1), $\tilde{\Lambda}\varepsilon$ is projective as a right $\varepsilon\tilde{\Lambda}\varepsilon$-module, so $\idim_{\varepsilon\tilde{\Lambda}\varepsilon}\tilde{\Lambda}\varepsilon\leqslant n$ and thus  $\Ext^{n+1}_{\varepsilon\tilde{\Lambda}\varepsilon}(\res_\varepsilon(X),\tilde{\Lambda}\varepsilon)=0$. Hence $\Ext^m_{\tilde{\Lambda}}(X,\tilde{\Lambda})=0$ and $\idim_{\tilde{\Lambda}}\tilde{\Lambda}\leqslant m-1=\max\{\idim_{\Lambda}\Lambda,2\}$ as required.
\end{proof}

\begin{example}\label{example:fg}
\begin{enumerate}
\item  Let $\Lambda = K[x]/\langle x^n\rangle$ for some $n\geqslant 2$. Let $A \geqslant 2$.
Then the stretched algebra $\tilde{\Lambda}$ has quiver
$$\xymatrix{
1 \ar[r]^{\alpha_1} & 2\ar[r]^{\alpha_2} & \cdots \ar[r]^{\alpha_{A-1}} &
A\ar@<1ex>@/^/@/^1pc/[lll]^{\alpha_A}
}$$
and $\tilde{I} = \langle (\alpha_1\cdots \alpha_A)^n\rangle$.
This is the algebra of \cite[Example 8.14]{PSS} with $m=A$.
\item Let $\Lambda=K\mathcal{Q}/I$ where $\cQ$ is the quiver
$$\xymatrix{
1\ar@<0.5ex>@/^/[r]^{\alpha} & 2 \ar@<0.5ex>@/^/[l]^{\beta}
}$$
and $I=\langle\alpha\beta\alpha, \beta\alpha\beta\rangle$.
Let $A=2$. Then the stretched algebra $\tilde{\Lambda}$ has quiver
$$\xymatrix{
\bullet\ar[r]^{\alpha_1} & \bullet\ar[d]^{\alpha_2}\\
\bullet\ar[u]^{\beta_2} & \bullet\ar[l]^{\beta_1}
}$$
and $\tilde{I}=\langle\alpha_1\alpha_2\beta_1\beta_2\alpha_1\alpha_2, \beta_1\beta_2\alpha_1\alpha_2\beta_1\beta_2\rangle$.
The stretched algebra $\tilde{\Lambda}$ is the algebra of \cite[Example 3.2]{FS}, where it was shown that $\tilde{\Lambda}$ has {\bf (Fg)} and that $\idim_{\tilde{\Lambda}}\tilde{\Lambda} = 2$. Thus we may use Theorem~\ref{thm:fg} to show that $\Lambda$ has {\bf (Fg)}. Moreover, it is immediate that $\Lambda$ is self-injective, so that the upper bound on $\idim_{\tilde{\Lambda}}\tilde{\Lambda}$ in Theorem~\ref{injectivedim} is achieved.
\end{enumerate}
\end{example}

\section{Minimal projective resolutions and $d$-Koszul algebras}\label{sec:minprojres}

In this section we keep the original assumptions, so that $K$ is a field, but is not necessarily algebraically closed, $\Lambda = K\cQ/I$ is a finite-dimensional algebra, $A \geqslant 1$, and $\tilde{\Lambda}$ is the stretched algebra. Then $I$ is generated by a minimal set $g^2$ of uniform elements in $K\cQ$, and $\tilde{I}$ is generated by the minimal set $\tilde{g}^2$ of uniform elements in $K\tilde{\cQ}$.

With the notation of \cite[Chapter I.6]{ASS}, in addition to the functor $\res_\varepsilon$ used above, we also have functors $T_\varepsilon, L_\varepsilon: \mod \varepsilon\tilde{\Lambda}\varepsilon \to \mod \tilde{\Lambda}$
so that $(T_\varepsilon, \res_\varepsilon, L_\varepsilon)$ is an adjoint triple connecting $\mod\varepsilon\tilde{\Lambda}\varepsilon$ and $\mod\tilde{\Lambda}$, namely:
$$\xymatrix{
\mod \tilde{\Lambda}\ar[rr]^{\res_\varepsilon} & & \mod \varepsilon\tilde{\Lambda}\varepsilon \ar@<-2ex>@/_1.2pc/[ll]_{T_\varepsilon}\ar@<2ex>@/^1.2pc/[ll]^{L_\varepsilon}
}$$
with $\res_\varepsilon(-) = (-)\varepsilon,\
T_\varepsilon(-) = -\otimes_{\varepsilon\tilde{\Lambda}\varepsilon}\varepsilon \tilde{\Lambda}$
and $L_\varepsilon(-) = \Hom_{\varepsilon\tilde{\Lambda}\varepsilon}(\tilde{\Lambda}\varepsilon, -)$.
The functor $T_\varepsilon$ carries projectives to projectives, and is an exact functor by Proposition~\ref{projBmodule}(2).
Using Theorem~\ref{thm:varepsilon}, we identify $\Lambda$ with $\varepsilon\tilde{\Lambda}\varepsilon$, and $\rrad$ with $\varepsilon\tilde{\rrad}\varepsilon$.

The main result of this section is Theorem~\ref{thm:projres} which takes a minimal projective resolution $(P^n, d^n)$ of $\Lambda/\rrad$ as a right $\Lambda$-module as given by Green, Solberg and Zacharia in \cite{GSZ}, and uses it to construct a minimal projective resolution $(\tilde{P}^n, \tilde{d}^n)$ of $\tilde{\Lambda}/ \tilde{\rrad}$ as a right $\tilde{\Lambda}$-module. We end the paper with an application to $d$-Koszul algebras.

We recall briefly the construction of \cite{GSZ}. Let $g^0$ be the set of vertices of $\cQ$, $g^1$ the set of arrows of $\cQ$, and $g^2$ the minimal generating for $I$ as above.
In \cite{GSZ}, the authors show that there are sets $g^n$ of uniform elements in $K\cQ$, for $n \geqslant 3$, such that for each $x \in g^n$ we have $x = \sum_i g^{n-1}_i r_i = \sum_j g^{n-2}_j s_j$ for unique $r_i \in K\cQ, s_j \in I$.
The sets $g^n$ can be chosen so that $(P^n, d^n)$ is a minimal projective resolution of $\Lambda/\rrad$ with the following properties:
\begin{enumerate}
\item[$\bullet$] $P^n = \oplus_{i} \mt(g^n_i) \Lambda$, for all $n \geqslant 0$;
\item[$\bullet$] $d^0 : P^0 \rightarrow \Lambda / \rrad$ is the canonical surjection;
\item[$\bullet$] for each $n \geqslant 1$ and $x \in g^n$ there are unique elements $r_i \in K\cQ$ with $x = \sum_i g^{n-1}_i r_i$;
\item[$\bullet$] for each $n \geqslant 1$ and for $x \in g^n$, the $\Lambda$-homomorphism $d^n : P^n \rightarrow P^{n-1}$ is such that $d^n(\mt(x))$ has entry $\mt(g^{n-1}_i)r_i$ in the summand of $P^{n-1}$ corresponding to $\mt(g^{n-1}_i)$.
\end{enumerate}

We come now to Theorem~\ref{thm:projres}. Note that the proof requires a technical result which we state and prove in Proposition~\ref{prop for thm} immediately following the theorem.

\begin{thm}\label{thm:projres}
Let $\Lambda=K\mathcal{Q}/I$ and let $\tilde{\Lambda}$ be the stretched algebra.
Let $(P^n, d^n)$ be a minimal projective resolution for $\Lambda/\rrad$ given by sets $g^n$.
Then $(\tilde{P}^n, \tilde{d}^n)$ is a minimal projective resolution for $\tilde{\Lambda}/ \tilde{\rrad}$ which is defined by sets $\tilde{g}^n$, where
\begin{enumerate}[\quad$\bullet$]
\item $\tilde{g}^0$ is the set of vertices of $\tilde{\cQ}$,
\item $\tilde{g}^1$ is the set of arrows of $\tilde{\cQ}$,
\item for $n \geqslant 2$, $\tilde{g}^n = \{\tilde{g}^n_i:=\theta^*(g^n_i) \mid g^n_i \in g^n\}$.
\end{enumerate}
\end{thm}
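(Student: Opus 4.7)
The plan is to verify that the proposed sets $\tilde{g}^n$ fulfil the hypotheses of the Green--Solberg--Zacharia construction applied to $\tilde{\Lambda}$: each $\tilde{g}^n_i$ must be uniform with target in $\tilde{\cQ}_0$, there must exist unique $\tilde{r}_j \in K\tilde{\cQ}$ with $\tilde{g}^n_i = \sum_j \tilde{g}^{n-1}_j \tilde{r}_j$ and elements $\tilde{s}_k \in \tilde{I}$ with $\tilde{g}^n_i = \sum_k \tilde{g}^{n-2}_k \tilde{s}_k$, and the resulting complex $(\tilde{P}^n, \tilde{d}^n)$ must be a minimal projective resolution of $\tilde{\Lambda}/\tilde{\rrad}$. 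The uniformity conditions are immediate: for $n \geq 2$, $\mo(\tilde{g}^n_i)=\mo(g^n_i)$ and $\mt(\tilde{g}^n_i)=\mt(g^n_i)$ both lie in $\cQ_0 \subseteq \tilde{\cQ}_0$ since $\theta^*$ fixes vertices.

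For the recursive relations in degrees $n \geq 3$ I would simply apply $\theta^*$ to the known GSZ equations $g^n_i = \sum_j g^{n-1}_j r_j = \sum_k g^{n-2}_k s_k$, producing $\tilde{g}^n_i = \sum_j \tilde{g}^{n-1}_j \theta^*(r_j) = \sum_k \tilde{g}^{n-2}_k \theta^*(s_k)$, with $\theta^*(s_k) \in \tilde{I}$ because $\theta^*(I) \subseteq \tilde{I}$ by construction. The case $n = 2$ needs a small manipulation: writing $g^2_i = \sum_j \beta_j r_j$ with $\beta_j \in \cQ_1$ and using $\theta^*(\beta_j) = (\beta_j)_1(\beta_j)_2\cdots(\beta_j)_A$, I would re-bracket as
$$\tilde{g}^2_i = \sum_j (\beta_j)_1 \bigl((\beta_j)_2 \cdots (\beta_j)_A\, \theta^*(r_j)\bigr),$$
exhibiting $\tilde{g}^2_i$ as a sum over the subset $\{(\beta_j)_1\}$ of $\tilde{g}^1$. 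Uniqueness in both cases follows from injectivity of $\theta^*$ combined with uniqueness of the original GSZ data, once one notes that an arrow of $\tilde{\cQ}$ starting at a vertex of $\cQ_0$ is necessarily of the form $\alpha_1$ for some $\alpha\in\cQ_1$.

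The main obstacle will be establishing exactness of the complex $(\tilde{P}^\bullet, \tilde{d}^\bullet)$ in low degrees, because $\tilde{P}^0$ and $\tilde{P}^1$ acquire additional summands indexed by the new vertices $w_i$ and the new arrows $\alpha_k$ ($k \geq 2$) which have no counterpart in $(P^\bullet, d^\bullet)$. These extra generators must resolve the simples $S(w_i)$ and control the syzygies passing through vertices of $\tilde{\cQ}_0 \setminus \cQ_0$, and exactness here cannot be derived from the transfer via $\theta^*$ alone. This is precisely where the technical Proposition~\ref{prop for thm} enters: it will describe the relevant syzygy elements at the intermediate vertices in terms of the subquivers $\Gamma_\alpha$ and the embedding $\Lambda \cong \varepsilon\tilde{\Lambda}\varepsilon \hookrightarrow \tilde{\Lambda}$, enabling an explicit calculation of $\Ker\tilde{d}^n$ in each degree. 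Minimality is then a final check: each $r_j$ from the original minimal resolution lies in the arrow ideal of $K\cQ$, and $\theta^*$ sends arrows to length-$A$ paths, so all $\tilde{r}_j$ automatically lie in $\tilde{\rrad}$.
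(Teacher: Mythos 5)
Your plan correctly identifies the easy parts (uniformity, the comultiplicative relations obtained by applying $\theta^*$, the re-bracketing needed in degree $2$, and the final minimality check), but it has a genuine gap: you have no mechanism for proving exactness of the complex in degrees $n\geqslant 3$. Verifying that the sets $\tilde{g}^n$ satisfy the Green--Solberg--Zacharia recursions $\tilde{g}^n_i=\sum_j\tilde{g}^{n-1}_j\tilde{r}_j=\sum_k\tilde{g}^{n-2}_k\tilde{s}_k$ is necessary but not sufficient --- the GSZ theorem asserts that suitable sets \emph{can be chosen}, not that any sets satisfying the recursions yield a resolution --- so the entire content of the theorem is exactness, and your proposal only addresses this in low degrees. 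The paper's key device, which you are missing, is the functor $T_\varepsilon(-)=-\otimes_{\varepsilon\tilde{\Lambda}\varepsilon}\varepsilon\tilde{\Lambda}$: it is exact because $\varepsilon\tilde{\Lambda}$ is projective as a left $\varepsilon\tilde{\Lambda}\varepsilon$-module (Theorem~\ref{projBmodule}(2)), it carries projectives to projectives, and one checks $T_\varepsilon(P^n)\cong\tilde{P}^n$ and $T_\varepsilon(d^n)=\tilde{d}^n$ for $n\geqslant 2$ (resp.\ $n\geqslant 3$). Applying $T_\varepsilon$ to the tail $\cdots\to P^3\to P^2$ of the given minimal resolution of $\Lambda/\rrad$ disposes of all degrees $\geqslant 3$ in one stroke; without this (or an equivalent induction), your ``explicit calculation of $\Ker\tilde{d}^n$ in each degree'' is not a proof.

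You have also slightly misplaced the difficulty in low degrees and misguessed the content of Proposition~\ref{prop for thm}. Exactness of $\tilde{P}^2\to\tilde{P}^1\to\tilde{P}^0\to\tilde{\Lambda}/\tilde{\rrad}\to 0$ is essentially free, since $\tilde{g}^0$, $\tilde{g}^1$ and $\tilde{g}^2$ are exactly the vertices, the arrows and a minimal uniform generating set for $\tilde{I}$, so this is the standard start of a minimal projective resolution; the extra summands at the new vertices $w_i$ cause no trouble there. The one genuinely delicate point is the splice $\Ker\tilde{d}^2=\Im\tilde{d}^3$, where the two halves of the construction meet. The paper handles it by decomposing $\tilde{x}=\tilde{x}\varepsilon+\sum_{w}\tilde{x}w$, using Proposition~\ref{prop:dimension of wiL}(5) to write $\varepsilon\tilde{\lambda}_jw=\theta(\lambda_{j,w})\tilde{p}_w$ and Proposition~\ref{prop:theta props} to cancel the paths $\tilde{q}_{\mt(\alpha_1)}$ and $\tilde{p}_w$, thereby reducing to the exactness of $(P^n,d^n)$ at $P^2$; Proposition~\ref{prop for thm} is precisely the statement that $z\in\Im d^3$, $\tilde{z}\in\Im\tilde{d}^3$ and $\tilde{z}\tilde{p}_w\in\Im\tilde{d}^3$ are equivalent, not a description of syzygies supported on the subquivers $\Gamma_\alpha$. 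You would need to supply both this reduction argument and the $T_\varepsilon$ step to complete the proof.
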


\begin{remark*}
Note that this agrees with the definition of $\tilde{g}^2$ given in Definition~\ref{defin:algebraconstruct}. Moreover, for $n\geqslant 2$, each $\tilde{g}^n_i$ is a uniform element which starts (resp.\ ends) at the vertex $\mathfrak{o}(g^n_i)$ (resp.\ $\mathfrak{t}(g^n_i)$) in $\mathcal{Q}_0$ and so   $\tilde{g}^n_i=\varepsilon\tilde{g}^n_i\varepsilon$.
\end{remark*}

\begin{proof}
Let $\tilde{g}^0$ be the set of vertices of $\tilde{\cQ}$, let $\tilde{g}^1$ be the set of arrows of $\tilde{\cQ}$, and let $\tilde{g}^2$ be as given in Definition~\ref{defin:algebraconstruct}.
For $n = 0, 1, 2$, define $\tilde{P}^n$ to be the projective $\tilde{\Lambda}$-module $\tilde{P}^n = \oplus_i \mt(\tilde{g}^n_i) \tilde{\Lambda}$. Define $\tilde{\Lambda}$-homomorphisms $\tilde{d}^0, \tilde{d}^1, \tilde{d}^2$ as follows:
\begin{enumerate}[\quad$\bullet$]
\item $\tilde{d}^0:\tilde{P}^0\rightarrow\tilde{\Lambda}/\tilde{\rrad}$ is the canonical surjection;
\item $\tilde{d}^1:\tilde{P}^1\rightarrow\tilde{P}^0$ is given by $\mathfrak{t}(\tilde{\alpha})\mapsto\tilde{\alpha}$ (where $\tilde{\alpha}$ is an arrow in $\tilde{\mathcal Q}$) with $\tilde{\alpha}$ in the summand of $\tilde{P}^0$ corresponding to $\mathfrak{o}(\tilde{\alpha})$;
\item write $\tilde{g}^2_i=\sum_{\tilde{\alpha}}\tilde{\alpha}\tilde{\beta}_{\tilde{\alpha}}$, where the sum is over all arrows $\tilde{\alpha}$ in $\tilde{\mathcal Q}$, and $\tilde{\beta}_{\tilde{\alpha}}\in K\tilde{\mathcal Q}$. Then $\tilde{d}^2:\tilde{P}^2\rightarrow\tilde{P}^1 $ is such that $\tilde{d}^2(\mathfrak{t}(\tilde{g}^2_i))$ has entry $\mt(\tilde{\alpha})\tilde{\beta}_{\tilde{\alpha}}$ in the summand of $\tilde{P}^1$ corresponding to $\mathfrak{t}(\tilde{\alpha})$.
\end{enumerate}
Then the sequence
$$\xymatrix@1{
\tilde{P}^2\ar[r]^{\tilde{d}^2} & \tilde{P}^1\ar[r]^{\tilde{d}^1} & \tilde{P}^0\ar[r]^{\tilde{d}^0} &
\tilde{\Lambda}/ \tilde{\rrad}\ar[r] & 0
}$$
is the first part of a minimal projective resolution of $\tilde{\Lambda}/ \tilde{\rrad}$ as defined by \cite{GSZ} so is exact.

\smallskip

Define $\tilde{g}^3 = \{\tilde{g}^3_i:=\theta^*(g^3_i) \mid g^3_i \in g^3\}$ and $\tilde{P}^3 = \oplus_i \mt(\tilde{g}^3_i) \tilde{\Lambda}$.
Fix the labelling of the set $g^2$ so that for each $g^3_i \in g^3$, there are elements $r_{i,j} \in K\cQ$ with $g^3_i = \sum_jg^2_jr_{i,j}$.
Define $\tilde{d}^3 : \tilde{P}^3\to \tilde{P}^2$ to be the $\tilde{\Lambda}$-homomorphism such that $\tilde{d}^3(\mathfrak{t}(\tilde{g}^3_i))$ has entry $\mathfrak{t}(\tilde{g}^2_j)\theta(r_{i,j})$ in the summand of $\tilde{P}^2$ corresponding to $\mathfrak{t}(\tilde{g}^2_j)$.
With these definitions, the next step is to show that the sequence
$$\xymatrix@1{
\tilde{P}^3\ar[r]^{\tilde{d}^3} & \tilde{P}^2\ar[r]^{\tilde{d}^2} & \tilde{P}^1\ar[r]^{\tilde{d}^1} & \tilde{P}^0\ar[r]^{\tilde{d}^0} &
\tilde{\Lambda}/ \tilde{\rrad}\ar[r] & 0
}$$
is exact.
We keep the following notation.
Write $g^2_j=\sum_{\alpha}\alpha\beta_{j, \alpha}$, where the sum is over all arrows $\alpha$ in $\mathcal{Q}_1$, and $\beta_{j, \alpha} \in K\cQ$. Then $\tilde{g}^2_j =
\sum_{\alpha \in {\cQ}_1}\alpha_1\alpha_2\cdots\alpha_A\theta^*(\beta_{j,\alpha})$
where $\theta^*(\alpha) = \alpha_1\alpha_2\cdots\alpha_A = \alpha_1\tilde{q}_{\mt(\alpha_1)}$.
Let $\tilde{x}\in \tilde{P}^2$ and write $\tilde{x}=\sum_j\mathfrak{t}(\tilde{g}^2_j)\tilde{\lambda}_j$ for some $\tilde{\lambda}_j\in \tilde{\Lambda}$.
Then $\tilde{d}^2(\tilde{x})$ has entry
$\sum_j\tilde{q}_{\mt(\alpha_1)}\theta(\beta_{j,\alpha})\tilde{\lambda}_j$
in the summand of $\tilde{P}^1$ corresponding to $\mathfrak{t}(\alpha_1)$.

First we show that $\Ker\tilde{d}^2\subseteq \Im \tilde{d}^3$.
Let $\tilde{x}=\sum_j\mathfrak{t}(\tilde{g}^2_j)\tilde{\lambda}_j \in \Ker\tilde{d}^2$ so that $\tilde{d}^2(\tilde{x})=0$. Then $\tilde{q}_{\mt(\alpha_1)}\sum_j\theta(\beta_{j,\alpha})\tilde{\lambda}_j=0$
for each arrow $\alpha \in {\cQ}_1$.
We have $\tilde{x}\varepsilon = \sum_j\mathfrak{t}(\tilde{g}^2_j)\tilde{\lambda}_j\varepsilon$ and $\varepsilon\tilde{\lambda}_j\varepsilon = \theta(\lambda_j)$ for some $\lambda_j \in \Lambda$. So
$0=\tilde{q}_{\mt(\alpha_1)}\sum_j\theta(\beta_{j,\alpha})\tilde{\lambda}_j\varepsilon=
\tilde{q}_{\mt(\alpha_1)}\theta(\sum_j\beta_{j,\alpha}\lambda_j)$.
Hence from Proposition~\ref{prop:theta props}(2), we have  $\theta(\sum_j\beta_{j,\alpha}\lambda_j)=0$ and so $\sum_j\beta_{j,\alpha}\lambda_j=0$ for each arrow $\alpha \in {\cQ}_1$ since $\theta$ is one-to-one.
Let $x_\varepsilon = \sum_j\mathfrak{t}(g^2_j)\lambda_j \in P^2$, so we have $x_\varepsilon\in\Ker d^2$.
But $\Im d^3 = \Ker d^2$ since $(P^n, d^n)$ is a minimal projective resolution of $\Lambda/\rrad$, so
$x_\varepsilon\in\Im d^3$.
By Proposition~\ref{prop for thm}, $\sum_j\mathfrak{t}(\tilde{g}^2_j)\theta(\lambda_j)$ is in $\Im\tilde{d}^3$, that is, $\tilde{x}\varepsilon$ is in $\Im\tilde{d}^3$.
Now let $w\in \tilde{\mathcal Q}_0\setminus {\mathcal Q}_0$. Then $\tilde{x}w = \sum_j\mathfrak{t}(\tilde{g}^2_j)\tilde{\lambda}_jw$ and $\varepsilon\tilde{\lambda}_jw = \theta(\lambda_{j,w})\tilde{p}_w$ for some $\lambda_{j,w} \in \Lambda$ by
Proposition~\ref{prop:dimension of wiL}(5). So
$0=\tilde{q}_{\mt(\alpha_1)}\sum_j\theta(\beta_{j,\alpha})\tilde{\lambda}_jw =
\tilde{q}_{\mt(\alpha_1)}\theta(\sum_j\beta_{j,\alpha}\lambda_{j,w})\tilde{p}_w$.
Using Proposition~\ref{prop:theta props}(1), we have $\theta(\sum_j\beta_{j,\alpha}\lambda_{j,w})=0$ and hence $\sum_j\beta_{j,\alpha}\lambda_{j,w}=0$ for each arrow $\alpha \in {\cQ}_1$.
Let $x_w = \sum_j\mathfrak{t}(g^2_j)\lambda_{j,w} \in P^2$, so that $x_w\in\Ker d^2=\Im d^3$.
By Proposition~\ref{prop for thm}, $\sum_j\mathfrak{t}(\tilde{g}^2_j)\theta(\lambda_{j,w})$ is in $\Im\tilde{d}^3$. Right multiplication by $\tilde{p}_w$ gives $\tilde{x}w\in \Im\tilde{d}^3$.
Thus $\tilde{x} = \tilde{x}\varepsilon + \sum_{w\in \tilde{\mathcal Q}_0\setminus {\mathcal Q}_0}\tilde{x}w\in \Im\tilde{d}^3$ and we have shown that $\Ker\tilde{d}^2\subseteq \Im \tilde{d}^3$.

Now we show that $\Im \tilde{d}^3\subseteq\Ker\tilde{d}^2$. Let $\tilde{x}= \sum_j\mt(\tilde{g}^2_j)\tilde{\lambda}_j\in \Im\tilde{d}^3$.
Then $\tilde{x}\varepsilon = \sum_j\mt(\tilde{g}^2_j)\tilde{\lambda}_j\varepsilon$
and $\varepsilon\tilde{\lambda}_j\varepsilon = \theta(\lambda_j)$ for some $\lambda_j \in \Lambda$.
So $\tilde{x}\varepsilon = \sum_j\mt(\tilde{g}^2_j)\theta(\lambda_j) \in \Im\tilde{d}^3$. Let $x_\varepsilon = \sum_j\mt(g^2_j)\lambda_j$ so that $x_\varepsilon \in \Im d^3$ by Proposition~\ref{prop for thm}.
Thus $x_\varepsilon \in \Ker d^2$ since $(P^n, d^n)$ is a minimal projective resolution of $\Lambda/\rrad$.
So $\sum_j\beta_{j,\alpha}\lambda_j=0$ for each arrow $\alpha \in {\cQ}_1$ and thus $\theta(\sum_j\beta_{j,\alpha}\lambda_j)=0$. Hence $\tilde{q}_{\mt(\alpha_1)}\sum_j\theta(\beta_{j,\alpha})\tilde{\lambda}_j\varepsilon=0$ for each arrow $\alpha \in {\cQ}_1$ and so $\tilde{d}^2(\tilde{x}\varepsilon) = 0$.
Now let $w\in \tilde{\mathcal Q}_0\setminus {\mathcal Q}_0$.
Then $\tilde{x}w = \sum_j\mathfrak{t}(\tilde{g}^2_j)\tilde{\lambda}_jw$ and $\varepsilon\tilde{\lambda}_jw = \theta(\lambda_{j,w})\tilde{p}_w$ for some $\lambda_{j,w} \in \Lambda$. So
$\tilde{x}w = \sum_j\mathfrak{t}(\tilde{g}^2_j)\theta(\lambda_{j,w})\tilde{p}_w\in \Im\tilde{d}^3$.
Let $x_w = \sum_j\mathfrak{t}(g^2_j)\lambda_{j,w}$. By Proposition~\ref{prop for thm}, $x_w \in \Im d^3$.
Thus $x_w \in \Ker d^2$ since $(P^n, d^n)$ is a minimal projective resolution of $\Lambda/\rrad$.
So $\sum_j\beta_{j,\alpha}\lambda_{j,w}=0$ for each arrow $\alpha \in {\cQ}_1$ and thus $\theta(\sum_j\beta_{j,\alpha}\lambda_{j,w})\tilde{p}_w=0$. Hence $\tilde{q}_{\mt(\alpha_1)}\sum_j\theta(\beta_{j,\alpha})\tilde{\lambda}_jw=0$ for each arrow $\alpha \in {\cQ}_1$ and so $\tilde{d}^2(\tilde{x}w) = 0$.
Since $\tilde{x} = \tilde{x}\varepsilon + \sum_{w\in \tilde{\mathcal Q}_0\setminus {\mathcal Q}_0}\tilde{x}w$, it follows that $\tilde{x}$ is in $\Ker\tilde{d}^2$ and hence $\Im \tilde{d}^3 \subseteq\Ker\tilde{d}^2$.
Therefore the sequence is exact up to $\tilde{P}^3$.

\smallskip

Finally, we recall that the functor $T_\varepsilon$ is exact and $(P^n, d^n)$ is a minimal projective resolution of $\Lambda/\rrad$ given by sets $g^n$, so the sequence
$$\xymatrix@1{
\cdots \ar[r] & T_\varepsilon(P^n)\ar[r]^{\!\! T_\varepsilon(d^n)} & T_\varepsilon(P^{n-1})\ar[r]
& \cdots\ar[r] & T_\varepsilon(P^3)\ar[r]^{T_\varepsilon(d^3)} & T_\varepsilon(P^2)
}$$
is exact and $T_\varepsilon(P^n)$ is a projective $\tilde{\Lambda}$-module for all $n \geqslant 2$. Identifying $\Lambda$ with $\varepsilon\tilde{\Lambda}\varepsilon$, we have
$T_\varepsilon(P^n) = \oplus_{i} \mt(\theta^*(g^n_i))\varepsilon\tilde{\Lambda}\varepsilon \otimes_{\varepsilon\tilde{\Lambda}\varepsilon}\varepsilon\tilde{\Lambda} \cong \oplus_{i} \mt(\theta^*(g^n_i))\tilde{\Lambda}$ for $n \geqslant 2$. In particular, we identify $T_\varepsilon(P^2)$ with $\tilde{P}^2$ and $T_\varepsilon(P^3)$ with $\tilde{P}^3$. It is easy to verify that $T_\varepsilon(d^3)$ is then identified with $\tilde{d}^3$.
So the sequence
$$\xymatrix@1{
\cdots \ar[r] & T_\varepsilon(P^n)\ar[r]^{\!\!\!\! T_\varepsilon(d^n)} & T_\varepsilon(P^{n-1})\ar[r]
& \cdots\ar[r] & T_\varepsilon(P^4)\ar[r]^{\ \ T_\varepsilon(d^4)} & \tilde{P}^3\ar[r]^{\tilde{d}^3} & \tilde{P}^2
}$$
is exact.

Hence we have a projective resolution
$$\xymatrix@1{
\cdots \ar[r] & T_\varepsilon(P^n)\ar[r]^{\ \ T_\varepsilon(d^n)} &
\ \cdots \ar[r] & T_\varepsilon(P^4)\ar[r]^{\ \ T_\varepsilon(d^4)} & \tilde{P}^3\ar[r]^{\tilde{d}^3} & \tilde{P}^2\ar[r]^{\tilde{d}^2} & \tilde{P}^1\ar[r]^{\tilde{d}^1} & \tilde{P}^0\ar[r]^{\tilde{d}^0} &
\tilde{\Lambda}/ \tilde{\rrad}\ar[r] & 0
}$$
for $\tilde{\Lambda}/ \tilde{\rrad}$.

To complete the proof, let $n \geqslant 4$, and define
$\tilde{g}^n = \{\tilde{g}^n_i:=\theta^*(g^n_i) \mid g^n_i \in g^n\}$ and $\tilde{P}^n = \oplus_i \mt(\tilde{g}^n_i) \tilde{\Lambda}$.
Write $g^n_i=\sum_jg^{n-1}_jr_j$ for some $r_j\in{K\mathcal{Q}}$, so that  $\tilde{g}^n_i=\sum_j\tilde{g}^{n-1}_j\theta^*(r_j)$.
Define $\tilde{d}^n : \tilde{P}^n \rightarrow \tilde{P}^{n-1}$ to be the $\Lambda$-homomorphism where $\tilde{d}^n(\mathfrak{t}(\tilde{g}^n_i))$ has entry $\mathfrak{t}(\tilde{g}^{n-1}_j)\theta(r_j)$ in the summand of $\tilde{P}^{n-1}$ corresponding to $\mathfrak{t}(\tilde{g}^{n-1}_j)$.
Then the identification of $T_\varepsilon(P^n)$ with $\tilde{P}^n$ (for $n\geqslant 2$) also identifies $T_\varepsilon(d^n)$ with $\tilde{d}^n$ for $n \geqslant 3$. So we have a projective resolution
$$\xymatrix@1{
\cdots \ar[r] & \tilde{P}^n\ar[r]^{\tilde{d}^n} & \tilde{P}^{n-1}\ar[r]
& \cdots\ar[r] & \tilde{P}^3\ar[r]^{\tilde{d}^3} & \tilde{P}^2\ar[r]^{\tilde{d}^2} & \tilde{P}^1\ar[r]^{\tilde{d}^1} & \tilde{P}^0\ar[r]^{\tilde{d}^0} &
\tilde{\Lambda}/ \tilde{\rrad}\ar[r] & 0
}$$
for $\tilde{\Lambda}/ \tilde{\rrad}$ given by the sets $\tilde{g}^n$. Minimality follows since $\Im\tilde{d}^n \subseteq \rad(\tilde{P}^{n-1})$ for all $n\geqslant 0$.
\end{proof}

\begin{prop}\label{prop for thm}
Let $z = \sum_j\mt(g^2_j)\mu_j \in P^2$ and $\tilde{z} = \sum_j\mt(\tilde{g}^2_j)\theta(\mu_j)\in \tilde{P}^2$, where $\mu_j \in \Lambda$.
Then the following are equivalent:
\begin{enumerate}
\item $z \in \Im d^3$;
\item $\tilde{z} \in \Im\tilde{d}^3$;
\item $\tilde{z}\tilde{p}_w \in \Im\tilde{d}^3$ for each $w \in \tilde{\mathcal Q}_0\setminus {\mathcal Q}_0$.
\end{enumerate}
\end{prop}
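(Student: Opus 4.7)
The plan is to exploit the idempotent $\varepsilon\in\tilde{\Lambda}$ and the compatibility it induces between the two resolutions. The key structural observation is that $\tilde{z}=\tilde{z}\varepsilon$, since each $\theta(\mu_j)$ lies in $\varepsilon\tilde{\Lambda}\varepsilon$, so $\tilde{z}$ already sits in the ``classical'' piece $\tilde{P}^2\varepsilon$ of $\tilde{P}^2$. The first thing I would establish is the identification $\tilde{P}^n\varepsilon\cong P^n$ (as right $\Lambda$-modules, via $\theta$) for $n\geqslant 2$: since $\mt(\tilde{g}^n_i)=\mt(g^n_i)\in\mathcal{Q}_0$, each summand $\mt(\tilde{g}^n_i)\tilde{\Lambda}\varepsilon$ coincides with $\mt(\tilde{g}^n_i)\varepsilon\tilde{\Lambda}\varepsilon$, which is isomorphic to $\mt(g^n_i)\Lambda$ via $\theta$. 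Under these identifications $\tilde{z}\leftrightarrow z$, the map $\tilde{d}^3$ restricts to $d^3$, and $(\Im\tilde{d}^3)\varepsilon=\tilde{d}^3(\tilde{P}^3\varepsilon)$ corresponds to $\Im d^3$ in $P^2$.

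With this first step in hand, (1) $\Leftrightarrow$ (2) is immediate: $\tilde{z}=\tilde{z}\varepsilon$ lies in $\Im\tilde{d}^3$ if and only if it lies in $(\Im\tilde{d}^3)\varepsilon$, if and only if $z\in\Im d^3$. The direction (2) $\Rightarrow$ (3) is immediate from $\tilde{\Lambda}$-linearity of $\tilde{d}^3$: $\tilde{z}=\tilde{d}^3(u)$ implies $\tilde{z}\tilde{p}_w=\tilde{d}^3(u\tilde{p}_w)\in\Im\tilde{d}^3$. The substantive work lies in (3) $\Rightarrow$ (1). Fix $w\in\tilde{\mathcal{Q}}_0\setminus\mathcal{Q}_0$ and write $\tilde{z}\tilde{p}_w=\tilde{d}^3(u_w)$ with $u_w\in\tilde{P}^3 w$; Proposition~\ref{prop:dimension of wiL}(5) applied summand-wise gives $\tilde{P}^3 w=\tilde{P}^3\varepsilon\,\tilde{p}_w$, so $u_w=u'_w\tilde{p}_w$ for some $u'_w\in\tilde{P}^3\varepsilon$. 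Then $(\tilde{z}-\tilde{d}^3(u'_w))\tilde{p}_w=0$, and applying Proposition~\ref{prop:theta props}(1) summand-wise in $\tilde{P}^2$ yields $(\tilde{z}-\tilde{d}^3(u'_w))v=0$, where $v=\mo(\tilde{p}_w)$. Thus $\tilde{z}v=\tilde{d}^3(u'_w v)\in\Im\tilde{d}^3$; transporting through the identification of Step~1 gives $zv\in\Im d^3$. Collecting these as $w$ ranges over $\tilde{\mathcal{Q}}_0\setminus\mathcal{Q}_0$, and summing over the decomposition $z=\sum_{v\in\mathcal{Q}_0}zv$, produces $z\in\Im d^3$.

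The main obstacle I expect is the bookkeeping in the (3) $\Rightarrow$ (1) direction: one must carefully peel off the factor $\tilde{p}_w$ from an element of $\Im\tilde{d}^3$ via Proposition~\ref{prop:dimension of wiL}(5), and then use Proposition~\ref{prop:theta props}(1) to convert the vanishing statement ``$(\cdots)\tilde{p}_w=0$'' into ``$(\cdots)v=0$''. This is the device that transports information at the $w$-columns of $\tilde{P}^2$ back to the $v$-columns for $v\in\mathcal{Q}_0$, where the identification of Step~1 then allows it to be read off as a statement about $z\in P^2$.
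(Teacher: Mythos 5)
Your overall route is essentially the paper's: both arguments rest on identifying $\tilde{P}^n\varepsilon$ with $P^n$ via $\theta$ (the paper does this implicitly by comparing the entries of $d^3(y)$ and $\tilde{d}^3(\tilde{y})$ and invoking injectivity of $\theta$), and both use Proposition~\ref{prop:theta props}(1) together with Proposition~\ref{prop:dimension of wiL}(5) to pass between an element of a $v$-column and its product with $\tilde{p}_w$. Your treatment of $(1)\Leftrightarrow(2)$ and of $(2)\Rightarrow(3)$ is correct and, if anything, more carefully organised than the paper's chain of ``if and only if''s.

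The gap is in the last step of $(3)\Rightarrow(1)$. For a fixed $w$ your peeling-off argument produces $zv\in\Im d^3$ only for $v=\mo(\tilde{p}_w)$, and as $w$ ranges over $\tilde{\cQ}_0\setminus\cQ_0$ the vertices $\mo(\tilde{p}_w)$ are exactly the sources of arrows of $\cQ$. If $\cQ$ has a sink $v_0$ (or if $A=1$, when $\tilde{\cQ}_0\setminus\cQ_0=\emptyset$ and condition $(3)$ is vacuous), the component $zv_0$ in your decomposition $z=\sum_{v\in\cQ_0}zv$ is never reached, so the final summation does not yield $z\in\Im d^3$. This is not merely presentational: for $\cQ\colon 1\stackrel{\alpha}{\to}2\stackrel{\beta}{\to}3\stackrel{\gamma}{\to}4$ with $I=\langle\alpha\beta\rangle$ one has $\Im d^3=0$, yet $z=\mt(g^2_1)\gamma\neq 0$ satisfies $\tilde{z}\tilde{p}_w=0\in\Im\tilde{d}^3$ for every $w$ (since $\theta(\gamma)$ ends at the sink $4$), so $(3)$ holds while $(1)$ fails. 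To be fair, the paper's own proof elides the same point: its biconditional obtained from Proposition~\ref{prop:theta props}(1) is only valid for elements supported at $\mo(\tilde{p}_w)$. In the one place where $(3)\Rightarrow(1)$ is actually used, in the proof of Theorem~\ref{thm:projres}, the coefficients satisfy $\lambda_{j,w}=\lambda_{j,w}\mo(\tilde{p}_w)$, and under that support hypothesis your single-$w$ argument is exactly what is needed. You should therefore either add the hypothesis $\mu_j=\mu_j\mo(\tilde{p}_w)$ when proving $(3)\Rightarrow(1)$ for a given $w$, or note explicitly that the equivalence with $(3)$ is only claimed under that restriction.
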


\begin{proof}
We keep the notation of the proof of Theorem~\ref{thm:projres}.
Let $z = \sum_j\mt(g^2_j)\mu_j\in P^2$, $y = \sum_i\mt(g^3_i)s_i\in P^3$, $\tilde{z} = \sum_j\mt(\tilde{g}^2_j)\theta(\mu_j)\in \tilde{P}^2$ and $\tilde{y} = \sum_i\mt(\tilde{g}^3_i)\theta(s_i)\in\tilde{P}^3$.
For each $g^2_j\in g^2$, consider the summand of $P^2$ (resp.\ $\tilde{P}^2$) corresponding to $\mt(g^2_j)$ (resp.\ $\mt(\tilde{g}^2_j)$).

By definition, $d^3(\mt(g^3_i))$ has entry $\mt(g^2_j)r_{i,j}$ in the summand of $P^2$ corresponding to $\mt(g^2_j)$,
and $\tilde{d}^3(\mathfrak{t}(\tilde{g}^3_i))$ has entry $\mathfrak{t}(\tilde{g}^2_j)\theta(r_{i,j})$ in the summand of $\tilde{P}^2$ corresponding to $\mathfrak{t}(\tilde{g}^2_j)$.
So $d^3(y)$ has entry $\sum_i\mt(g^2_j)r_{i,j}\mt(g^3_i)s_i$ in the summand of $P^2$ corresponding to $\mt(g^2_j)$, and $\tilde{d}^3(\tilde{y})$ has entry
$\sum_i\mt(\tilde{g}^2_j)\theta(r_{i,j})\mt(\tilde{g}^3_i)\theta(s_i)$ in the summand of $\tilde{P}^2$ corresponding to $\mt(\tilde{g}^2_j)$.
Since $\theta$ is one-to-one, $\mt(g^2_j)r_{i,j}\mt(g^3_i)s_i = \mt(g^2_j)\mu_j$ if and only if
$\mt(\tilde{g}^2_j)\theta(r_{i,j})\mt(\tilde{g}^3_i)\theta(s_i) = \mt(\tilde{g}^2_j)\theta(\mu_j)$.
Hence $z = d^3(y)$ if and only if $\tilde{z} = \tilde{d}^3(\tilde{y})$.
By Proposition~\ref{prop:theta props}(1), $\mt(\tilde{g}^2_j)\theta(r_{i,j})\mt(\tilde{g}^3_i)\theta(s_i) = \mt(\tilde{g}^2_j)\theta(\mu_j)$ if and only if $\mt(\tilde{g}^2_j)\theta(r_{i,j})\mt(\tilde{g}^3_i)\theta(s_i)\tilde{p}_w = \mt(\tilde{g}^2_j)\theta(\mu_j)\tilde{p}_w$ where $w \in \tilde{\mathcal Q}_0\setminus {\mathcal Q}_0$.
Hence $\tilde{z} = \tilde{d}^3(\tilde{y})$ if and only if $\tilde{z}\tilde{p}_w = \tilde{d}^3(\tilde{y}\tilde{p}_w)$. The result follows.
\end{proof}

The rest of this section concerns the application of Theorem~\ref{thm:projres} to $d$-Koszul algebras, whereby we recover a result of Leader \cite[Theorem 8.15]{L}.
Recall that a graded algebra $\Lambda = \Lambda_0 \oplus\Lambda_1 \oplus\cdots$ is said to be Koszul if $\Lambda_0$ has a linear resolution, that is, if the $n$th projective module $P^n$ in a minimal graded projective resolution $(P^n, d^n)$ of $\Lambda_0$ is generated in degree $n$.
Berger then introduced $d$-Koszul algebras, for $d \geqslant 2$, in \cite{B} motivated by certain cubic Artin-Schelter regular
algebras and anti-symmetrizer algebras.
For finite-dimensional algebras, these were further generalised, firstly to $(D,A)$-stacked monomial algebras by Green and Snashall (\cite[Definition 3.1]{GS-colloq math} and see \cite{GS-J Alg}) and then by Leader and Snashall to $(D, A)$-stacked algebras in \cite{LS}.

\begin{defin}\cite[Definition 2.1]{LS}\label{defin:(D,A)-stacked}
Let $\Lambda=K\mathcal{Q}/I$ be a finite-dimensional algebra. Then $\Lambda$ is a $(D, A)$-stacked
algebra if there is some $D \geqslant 2, A \geqslant 1$ such that, for all $0 \leqslant n \leqslant \gldim \Lambda$, the projective module $P^n$ in a minimal projective resolution of
$\Lambda/\rrad$ is generated in degree $\delta(n)$, where
$$\delta(n)=
\begin{cases}
0 & \mbox{if $n=0$}\\
1 & \mbox{if $n=1$}\\
\frac{n}{2}D & \mbox{if $n$ even, $n \geqslant 2$}\\
\frac{n-1}{2}D+A & \mbox{if $n$ odd, $n \geqslant 3$.}
\end{cases}$$
\end{defin}

When $A=1$ and $D=d$, the $(d,1)$-stacked algebras are precisely the finite-dimensional $d$-Koszul algebras of Berger (with the case $A=1, D=2$ giving the finite-dimensional Koszul algebras). In all cases, $\Lambda$ is a graded algebra in which the $n$th projective module $P^n$ in a minimal graded projective resolution $(P^n, d^n)$ of $\Lambda_0$ is generated in a single degree, and for which the Ext algebra $E(\Lambda)$ is finitely generated. Specifically, it was shown in \cite[Theorem 4.1]{GMMVZ}, that the Ext algebra of a $d$-Koszul algebra is generated in degrees 0, 1 and 2, and, in \cite[Theorem 2.4]{LS} that the Ext algebra of a $(D, A)$-stacked algebra is generated in degrees 0, 1, 2 and 3.

We now apply Theorem~\ref{thm:projres} to $d$-Koszul algebras.

\begin{thm}\cite[Theorem 8.15]{L}\label{(D,A)}
Let $\Lambda=K\mathcal{Q}/I$ be a $d$-Koszul algebra for some $d\geqslant 2$. Let $A\geqslant 1$ and set $D = dA$.
Then the algebra $\tilde{\Lambda}_A$ is a $(D,A)$-stacked algebra.
\end{thm}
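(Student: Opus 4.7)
The plan is to use Theorem~\ref{thm:projres} directly, and then simply track degrees. Since $\Lambda$ is $d$-Koszul, the defining relations of $I$ are length homogeneous of length $d$, so by property~(5) of the Proposition in Section~\ref{sec:dtoD}, $\tilde{I}$ is length homogeneous of length $dA = D$. Hence both $\Lambda$ and $\tilde{\Lambda}$ are length-graded algebras with $\Lambda_0 \cong \Lambda/\rrad$ and $\tilde{\Lambda}_0 \cong \tilde{\Lambda}/\tilde{\rrad}$. Moreover, since $\Lambda$ is $d$-Koszul, a result of \cite{GMMVZ} guarantees that the sets $g^n$ giving the minimal projective resolution $(P^n, d^n)$ of $\Lambda/\rrad$ can be chosen so that each $g^n_i$ is length homogeneous of length $\delta(n)$.

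Next I would apply Theorem~\ref{thm:projres} to obtain the minimal projective resolution $(\tilde P^n, \tilde d^n)$ of $\tilde{\Lambda}/\tilde{\rrad}$ defined by the sets $\tilde g^n$. The key observation is that the homomorphism $\theta^*$ sends each arrow of $\mathcal{Q}$ to a path of length $A$ in $\tilde{\mathcal{Q}}$, so $\theta^*$ multiplies lengths by $A$ on length homogeneous elements. Therefore, for $n \geqslant 2$, each $\tilde g^n_i = \theta^*(g^n_i)$ is length homogeneous of length $A\,\delta(n)$, while $\tilde g^0$ (vertices) and $\tilde g^1$ (arrows) have lengths $0$ and $1$ respectively.

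It remains to verify that the degrees $A\,\delta(n)$ (for $n \geqslant 2$) together with $0$ and $1$ (for $n = 0, 1$) coincide with the values $\tilde\delta(n)$ in Definition~\ref{defin:(D,A)-stacked} for $D = dA$. This is a direct calculation: for $n$ even, $n \geqslant 2$,
\[
A\,\delta(n) = A\cdot\tfrac{n}{2}d = \tfrac{n}{2}D = \tilde\delta(n),
\]
and for $n$ odd, $n \geqslant 3$,
\[
A\,\delta(n) = A\bigl(\tfrac{n-1}{2}d + 1\bigr) = \tfrac{n-1}{2}D + A = \tilde\delta(n).
\]
Thus each projective $\tilde P^n = \oplus_i \mt(\tilde g^n_i)\tilde{\Lambda}$ is generated in the single degree $\tilde\delta(n)$, which is precisely the condition for $\tilde{\Lambda}_A$ to be $(D,A)$-stacked.

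There is no serious obstacle here: the hard work has already been done in Theorem~\ref{thm:projres}, which transports the minimal projective resolution through the stretching construction. The only subtle point is to keep straight that the stretched algebra inherits a length grading (guaranteed by property~(5) of the construction) and that the sets $\tilde g^0$ and $\tilde g^1$ are taken to be \emph{all} vertices and arrows of $\tilde{\mathcal{Q}}$ rather than images under $\theta^*$; once this is noted, the degree bookkeeping goes through cleanly.
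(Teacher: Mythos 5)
Your proposal is correct and follows essentially the same route as the paper's own proof: apply Theorem~\ref{thm:projres}, observe that $\theta^*$ multiplies lengths by $A$ so that $\ell(\tilde{g}^n_i) = A\,\ell(g^n_i)$ for $n\geqslant 2$, and check that the resulting degrees match $\delta(n)$ for $D=dA$. The degree bookkeeping is identical to the paper's.
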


\begin{proof}
Let $\Lambda=K\mathcal{Q}/I$ be a $d$-Koszul algebra. Let $(P^n, d^n)$ be a minimal projective resolution for $\Lambda/\rrad$ given by sets $g^n$.
Then $P^n$ is generated in degree
$$\begin{cases}
\frac{n}{2}d & \mbox{if $n$ even, $n \geqslant 0$}\\
\frac{n-1}{2}d + 1 & \mbox{if $n$ odd, $n \geqslant 1$}
\end{cases}$$
and each $g^n_i \in g^n$ is a uniform homogeneous element with
$$\ell(g^n_i) =
\begin{cases}
\frac{n}{2}d & \mbox{if $n$ even, $n \geqslant 0$}\\
\frac{n-1}{2}d + 1 & \mbox{if $n$ odd, $n \geqslant 1$.}
\end{cases}$$
Let $(\tilde{P}^n, \tilde{d}^n)$ be the minimal projective resolution for $\tilde{\Lambda}_A/ \tilde{\rrad}_A$ given by sets $\tilde{g}^n$ from Theorem~\ref{thm:projres}.
For $n \geqslant 2$, and each $g^n_i \in g^n$, we have $\ell(\tilde{g}^n_i) = A\cdot\ell(g^n_i)$. Thus
$$\ell(\tilde{g}^n_i) =
\begin{cases}
\frac{n}{2}dA & \mbox{if $n$ even, $n \geqslant 2$}\\
\frac{n-1}{2}dA + A & \mbox{if $n$ odd, $n \geqslant 3$.}
\end{cases}$$
Let $D=dA$. Then, for all $n \geqslant 0$, we have $\ell(\tilde{g}^n_i) = \delta(n)$ so $\tilde{P}^n$ is generated in degree $\delta(n)$. Thus $\tilde{\Lambda}_A$ is a $(D,A)$-stacked algebra.
\end{proof}

\begin{example}\label{example:stacked}
Let $\Lambda, \tilde{\Lambda}$ be the algebras of Example~\ref{example:fg}(2).
The algebra $\Lambda$ is a $d$-Koszul monomial algebra with $d=3$. It now follows from Theorem~\ref{(D,A)} that $\tilde{\Lambda}$ is a $(6,2)$-stacked monomial algebra. Indeed $\tilde{\Lambda}$ was given in \cite{GS-colloq math} as an example of a $(6,2)$-stacked monomial algebra.
\end{example}

\end{document}